%% 
%% Copyright 2007-2020 Elsevier Ltd
%% 
%% This file is part of the 'Elsarticle Bundle'.
%% ---------------------------------------------
%% 
%% It may be distributed under the conditions of the LaTeX Project Public
%% License, either version 1.2 of this license or (at your option) any
%% later version.  The latest version of this license is in
%%    http://www.latex-project.org/lppl.txt
%% and version 1.2 or later is part of all distributions of LaTeX
%% version 1999/12/01 or later.
%% 
%% The list of all files belonging to the 'Elsarticle Bundle' is
%% given in the file `manifest.txt'.
%% 

%% Template article for Elsevier's document class `elsarticle'
%% with numbered style bibliographic references
%% SP 2008/03/01
%%
%% 
%%
%% $Id: elsarticle-template-num.tex 190 2020-11-23 11:12:32Z rishi $
%%
%%
\documentclass[preprint,3p,12pt]{elsarticle}

%\documentclass[onecolumn]{elsart3p}

%% Use the option review to obtain double line spacing
%% \documentclass[authoryear,preprint,review,12pt]{elsarticle}

%% Use the options 1p,twocolumn; 3p; 3p,twocolumn; 5p; or 5p,twocolumn
%% for a journal layout:
%% \documentclass[final,1p,times]{elsarticle}
%% \documentclass[final,1p,times,twocolumn]{elsarticle}
%% \documentclass[final,3p,times]{elsarticle}
%% \documentclass[final,3p,times,twocolumn]{elsarticle}
%% \documentclass[final,5p,times]{elsarticle}
%% \documentclass[final,5p,times,twocolumn]{elsarticle}

%% For including figures, graphicx.sty has been loaded in
%% elsarticle.cls. If you prefer to use the old commands
%% please give \usepackage{epsfig}

%% The amssymb package provides various useful mathematical symbols
\usepackage{amssymb, bm}
%% The amsthm package provides extended theorem environments
\usepackage{amsthm, amsmath}
\newtheorem{theorem}{Theorem}[section]        % resets each section
 
%% The lineno packages adds line numbers. Start line numbering with
%% \begin{linenumbers}, end it with \end{linenumbers}. Or switch it on
%% for the whole article with \linenumbers.
%% \usepackage{lineno}

\usepackage{graphicx,color}
\usepackage{caption}
\usepackage{subcaption}
\captionsetup[sub]{justification=centering} 
\usepackage{multirow}
\usepackage{mathtools}
\usepackage{xparse}
  
\usepackage{listings}%% for code
  
\usepackage{tikz}
\usetikzlibrary{spy,decorations.markings}
\usetikzlibrary{arrows.meta,positioning,shapes.geometric}
\usetikzlibrary{calc}
\definecolor{OIblue}{HTML}{0072B2}
\definecolor{OIverm}{HTML}{D55E00}
\definecolor{OIgreen}{HTML}{009E73}
\definecolor{OIorange}{HTML}{E69F00}
\definecolor{OIsky}{HTML}{56B4E9}
\definecolor{OIyellow}{HTML}{F0E442}
\definecolor{OIpurple}{HTML}{CC79A7}
\definecolor{OIblack}{HTML}{000000}

\usepackage{pgfplots}
\pgfplotsset{compat=1.18} 

\usepackage{float}
 
 \usepackage{booktabs}
 \usepackage{siunitx}
 \sisetup{
   round-mode=places,
   retain-zero-exponent=false
 }

% ==== load breqn LAST among math packages ====
%\usepackage{breqn}
\usepackage{autobreak} % try to keep its use minimal (see notes below)
\usepackage{eqparbox}                  % equal-width boxes for LHS alignment

% same-width, right-aligned left column (works in math or text)

% ==== hyperref LAST overall ====
\usepackage[hidelinks]{hyperref}  % or: \usepackage[colorlinks=true,linkcolor=blue]{hyperref}

\newtheorem{assumption}{Assumption}

%\journal{Journal of Computational Physics}
\journal{Arxiv}

\begin{document} 
%{\color{red}
%\begin{itemize}
%%\item Fig 6b, 6e need to have DG overshoot/undershoot in the colorbar.
%%\item Fig. 3a need to change title from CCFV to DG
%%\item Need to fix the DG vs FV percentanges for badia example
%\item iteration counts?
%\item comparisons to other methods FCT?
%\item 1D or 3D examples?
%\item better doodle for voronoi mesh showing $K$-orthogonality
%\item describe the limiter used
%\item title of paper
%\item maybe present the paper as a way to find FV and DG regions, while being BP.
%\end{itemize}
%}
\begin{frontmatter}

%% Title, authors and addresses

%% use the tnoteref command within \title for footnotes;
%% use the tnotetext command for theassociated footnote;
%% use the fnref command within \author or \address for footnotes;
%% use the fntext command for theassociated footnote;
%% use the corref command within \author for corresponding author footnotes;
%% use the cortext command for theassociated footnote;
%% use the ead command for the email address,
%% and the form \ead[url] for the home page:
%% \title{Title\tnoteref{label1}}
%% \tnotetext[label1]{}
%% \author{Name\corref{cor1}\fnref{label2}}
%% \ead{email address}
%% \ead[url]{home page}
%% \fntext[label2]{}
%% \cortext[cor1]{}
%% \affiliation{organization={},
%%             addressline={},
%%             city={},
%%             postcode={},
%%             state={},
%%             country={}}
%% \fntext[label3]{}

\title{ 
A bound-preserving multinumerics scheme for steady-state convection–diffusion equations
}

%% use optional labels to link authors explicitly to addresses:
%% \author[label1,label2]{}
%% \affiliation[label1]{organization={},
%%             addressline={},
%%             city={},
%%             postcode={},
%%             state={},
%%             country={}}
%%
%% \affiliation[label2]{organization={},
%%             addressline={},
%%             city={},
%%             postcode={},
%%             state={},
%%             country={}}
 
\author[inst1]{Maurice S. Fabien}

\affiliation[inst1]{organization={Center for Computational Science and Engineering, Schwarzman College of Computing, 
\\
Massachusetts Institute of Technology},%Department and Organization
            addressline={77 Massachusetts Avenue}, 
            city={Cambridge},
            state={MA},
            postcode={02139}, 
            country={USA}} 

\begin{abstract}
%% Text of abstract
	We solve the convection–diffusion equation using a coupling of cell-centered finite volume (FV) and discontinuous Galerkin (DG) methods. The domain is divided into disjoint regions assigned to FV or DG, and the two methods are coupled through an interface term.  DG is stable and resolves sharp layers in convection-dominated regimes, but it can produce sizable spurious oscillations and is computationally expensive; FV (two-point flux) is low-order and monotone, but inexpensive. We propose a novel adaptive partitioning strategy that automatically selects FV and DG subdomains: whenever the solution's cell average violates the bounds, we switch to FV on a small neighborhood of that element. Viewed as a natural analog of $p$-adaptivity, this process is repeated until all cell averages are bound-preserving (up to some specified tolerance). Thereafter, standard conservative limiters may be applied to ensure the full solution is bound-preserving.  Standard benchmarks confirm the effectiveness of the adaptive technique.
\end{abstract}

%%Graphical abstract
% \begin{graphicalabstract}
% \includegraphics{grabs}
% \end{graphicalabstract}

% %%Research highlights
% \begin{highlights}
% \item 
% \end{highlights}

\begin{keyword}
%% keywords here, in the form: keyword \sep keyword
finite elements 
\sep
 discontinuous Galerkin methods 
 \sep 
 bound-preserving
\sep
slope limiters
\sep
convection-dominated flow 
\end{keyword}

\end{frontmatter}

%% \linenumbers

%% main text
\section{Introduction} \label{sec:intro} 
	As a core transport model, the convection-diffusion equation captures the interplay of flow and diffusion. Transport of scalar fields (e.g., temperature in energy models, concentration in mass transport) is commonly represented by partial differential equations that combine convection by a velocity field with diffusion. When convection outweighs diffusion, the problem enters an convection-dominated regime and solutions develop narrow boundary or internal layers with very steep spatial variation. Such layers occur in both steady and time-dependent cases; however, the latter typically do not introduce similarly small time scales \cite{stynes2005steady}. Our focus concentrates on the steady-state problem.
 
	Mesh-based discretizations struggle when convection overwhelms diffusion because the characteristic layers are much narrower than a feasible mesh spacing. Stabilization is essential for many classical methods; otherwise oscillations contaminate the entire solution \cite{stynes2018convection}.  A thorough review of stabilization techniques in the finite element setting is given in \cite{roos2014numerical}.
			%When dealing with numerical methods for convection-dominated problems, it is well known that the solution order of accuracy drops in the vicinity of steep gradients or discontinuities.  
	This work studies the coupling of classical cell-centered finite volume (CCFV or FV)~\cite{EYMARD2000713} and discontinuous Galerkin (DG) methods \cite{dolejvsi2015discontinuous,di2011mathematical}. Among methods for convection-diffusion, CCFV and DG are two of the most commonly employed discretizations \cite{augustin2011assessment}. Cell-centered finite volume schemes are locally conservative, inexpensive, and under standard two-point flux approximations are monotone, which makes them robust for convection-dominated problems. Their drawbacks are low formal accuracy (typically first order), layer smearing, possible grid-orientation effects, and loss of consistency on non-orthogonal meshes or with strong anisotropy unless more elaborate fluxes or reconstructions are used~\cite{aavatsmark2002introduction}.
	
	 Discontinuous Galerkin methods, by contrast, offer high-order accuracy, natural $hp$-adaptivity, compact element-local couplings that parallelize well, and sharp resolution of layers via upwind and penalty fluxes; however, they introduce many more unknowns, in many cases require parameter tuning, are costlier in memory and time, and are not inherently monotone; overshoots/undershoots can appear without limiting. Coupling CCFV and DG capitalizes on their strengths, and results in a locally conservative scheme.  The analyses in \cite{chidyagwai2011coupling,riviere2014convergence} establish theoretical foundations for CCFV–DG coupling for linear convection-diffusion problems, and \cite{doyle2020multinumerics} demonstrates its effectiveness for nonlinear multiphase flow. Nevertheless, a key unresolved issue is how to decide in an automated way where within the computational domain to deploy FV versus DG, i.e., when and where each scheme should be activated.
  
%	We introduce a novel bound-preserving technique for the coupling of CCFV–DG for steady convection-diffusion problems. Overshoot and undershoot are present due to the high-order DG region. 
	We introduce a novel bound-preserving technique for coupling cell-centered finite volume (CCFV) with DG in steady convection-diffusion problems. It targets the spurious overshoots and undershoots that can arise in the high-order DG region.  One way to deal with these spurious oscillations is to leverage slope limiters \cite{berger2005analysis}.  Most popular slope limiters assume the cell averages are already correct and therefore preserve them during limiting. To repurpose conservative limiters, we focus on generating bound-preserving cell averages. Rather than relying on fix-ups or postprocessing of the DG cell averages, we perform a procedure similar to $p$-adaptivity. If the DG cell average bound-violating (up to some tolerance, e.g., a scaling of machine precision), we revert the approximation to CCFV in a small neighborhood of the troubled cell. Our working ansatz is that a suitable threshold between high-order DG and CCFV yields cell averages that are bound-preserving up to a prescribed tolerance.  The intuition is that if we use CCFV over the entire mesh, then the method will be bound-preserving.  Once satisfactory cell averages are obtained, any standard conservative slope limiters can be used to ensure the entire solution is bound-preserving.
	
%	It is well known that, for elliptic problems, the IPDG bilinear form with piecewise-constant trial and test spaces fails to support standard convergence analysis. In this case, we adopt a multinumerics scheme.  The classical two-point finite volume method is leveraged in lieu of piecewise constants. Here, there are additional constraints on the mesh, as the classical CCFV requires $K$-orthogonality for optimal convergence.
 
  % The approach we take is more a philosophical one: we do not manipulate the cell averages in any way. This is in contrast to postprocessings such as flux corrected transport. The cell averages resulting from the adaptivity technique are bound-preserving up to some specific tolerance.  In practice, we can always expect numerical round-off errors.
   
	Instead of manipulating cell averages via postprocessing, we leave them unchanged (we force the cell averages from the approximation to be bound-preserving, without manipulation \cite{fabien2025high,fabien2024positivity}). The adaptivity yields bound-preserving cell averages up to a prescribed tolerance (and, irrespective of adaptivity, cell averages are generally only accurate up to floating-point round-off).  Moreover, $hp$-adaptivity is widely recognized as a state-of-the-art strategy, often attaining a specified error tolerance with markedly fewer degrees of freedom~\cite{demkowicz2006computing}.  Our adaptive technique can easily be incorporated into standard $hp$-adaptivity routines, and does not modify the CCFV scheme, DG scheme, or postprocess cell averages.
   
	As an alternative, full DG frameworks may be employed where applicable (e.g., the hybridized LDG method \cite{cockburn2010hybridizable} permits piecewise-constant approximations, and leads to a finite volume analog \cite{sevilla2018face}).  In addition, the proposed adaptivity can be modified to ensure nodal bound-preservation.   
   
	We next survey some related works. Due to the large body of literature surrounding the subject, this survey is non-exhaustive. The work \cite{FRERICHS2021113487} develops nonintrusive postprocessings for DG solutions to suppress spurious oscillations. While the procedure is computationally cheap, it does not enforce bounds preservation. A style of FV-DG limiter computation is introduced in \cite{Dolejsi2002,Dolejsi2003}.  Here, the authors modify the convective form by using the $L^2$ projection of the high-order DG solution onto piecewise constants.  Their error indicator is based on the jump of the DG approximation across element interfaces, and this is used to determine which cells activate the modified convective form. 
 
	Several nonlinear approaches are known to perform well in mitigating spurious oscillations. Artificial viscosity approaches introduce extra diffusion into the discretization, in the hope that it will stabilize as well as prevent unphysical oscillations~\cite{guermond2011entropy,michoski2016comparison,yu2020study}. In flux-corrected transport, one uses a diffusive, positivity-preserving low-order scheme, adds a high-order correction to improve accuracy, and limits the resulting antidiffusive fluxes to avoid new oscillations~\cite{kuzmin2021new,joshaghani2022maximum}. Finally, we mention a review paper on nonlinear bound-preserving  modifications (such as variational-inequality formulations)~\cite{badia2015discrete,barrenechea2024finite}.  Each of the aforementioned approaches has trade-offs: some are inexpensive but not bound-preserving; others are bound-preserving but costly; some depart from the original DG discretization or lose conservation of mass; and most lack a formal error analysis.
 
\section{Model problems}   
   The model PDE considered in this work are detailed here.  Let $\Omega\subset\mathbb{R}^d$ ($d=2,3$) be a bounded polygonal domain.
   \begin{align} 
   -\nabla \cdot(K \nabla u) + \nabla \cdot (\vec{\beta}u) + c u & = f, &&\text{in } \Omega, \label{model_problem1}
   \\
	u &= g_D ,&&\text{on } \Gamma_D, \notag
	\\
-K\nabla u \cdot {\bm n} &= 0 ,&&\text{on } \Gamma_N \notag,
   \end{align} 
   with $\Omega\subset \mathbb{R}^n$ is a bounded domain with polyhedral Lipschitz boundary $\partial\Omega = \Gamma_D\cup \Gamma_N$ (and $\Gamma_D\cap \Gamma_N=\emptyset$). The remaining parameters are as follows: $K$ is the diffusion coefficient (bounded above/below by positive constants), $\vec{\beta}$ is the convection field, $c$ is the reaction coefficient, and $f$ is a source term.  In addition, we have nonhomogeneous Dirichlet boundary conditions $(g_D)$ imposed on $\Gamma_D$, and homogeneous Neumann conditions on $\Gamma_N$, where ${\bm n}$ is the unit outward normal to $\partial\Omega$.  The inflow boundary is defined as $\Gamma_{\text{inflow}} = \{x\in \Gamma: \vec{\beta}(x)\cdot {\bm n} <0\}\subset \Gamma_D$.    
\section{Preliminaries}
%The set \(\mathcal{T}_h\) is partitioned into two nonoverlapping submeshes, the DG region \(\mathcal{T}_h^{DG}\) and the CCFV region \(\mathcal{T}_h^{FV}\), so that \(\mathcal{T}_h = \mathcal{T}_h^{DG} \cup \mathcal{T}_h^{FV}\) and \(\mathcal{T}_h^{DG} \cap \mathcal{T}_h^{FV} = \emptyset\).  That is, the union of the DG and FV submeshes comprises of the entire mesh, and, the DG and CCFV regions are nonoverlapping. 
%\subsection*{Mesh and interface notation}
	The bounded polygonal domain $\Omega\subset\mathbb{R}^d$ is decomposed into a finite volume region $\Omega_F$ and a discontinuous Galerkin region $\Omega_D$ with matching interface $\Gamma_{DF} := \partial\Omega_F \cap \partial\Omega_D$, so that $\Omega = \Omega_F \cup \Omega_D$ and $\Omega_F \cap \Omega_D = \emptyset$. Let $\mathcal{T}_h^{FV}$ and $\mathcal{T}_h^{DG}$ be conforming partitions of $\Omega_F$ and $\Omega_D$, respectively. FV cells are denoted by $V\in\mathcal{T}_h^{FV}$ (Voronoi in the paper), and DG cells by $W\in\mathcal{T}_h^{DG}$ (simplices/hexahedra or Voronoi). Write $h_V:=\mathrm{diam}(V)$, $h_W:=\mathrm{diam}(W)$, $h_{FV}:=\max_{V\in\mathcal{T}_h^{FV}} h_V$, $h_{DG}:=\max_{W\in\mathcal{T}_h^{DG}} h_W$, and $h:=\max(h_{FV},h_{DG})$. For an edge/face $\gamma$, let $|\gamma|$ denote its $(d-1)$–dimensional measure. We split facets into interior and boundary parts:
\[
\Gamma_h^{FV} = \Gamma_{h,I}^{FV} \cup \Gamma_{h,\partial}^{FV},\qquad
\Gamma_h^{DG} = \Gamma_{h,I}^{DG} \cup \Gamma_{h,\partial}^{DG},\qquad
\Gamma_h^{FV-DG} := \{\gamma \subset \Gamma_h^{FV}\cap \Gamma_h^{DG}\}.
\]
Here $\Gamma_{h,I}^{FV}$ are edges interior to $\Omega_{FV}$, $\Gamma_{h,\partial}^{FV}$ lie on $\partial\Omega_{FV}\cap\partial\Omega$, and analogously for $DG$. We will also use the following notations
\begin{alignat*}{2}
  \Gamma_{h,\text{dir}}^{DG} & = \Gamma_{h}^{DG} \cap \Gamma_{D},  &\qquad
  \Gamma_{h,\text{dir}}^{FV} & = \Gamma_{h}^{FV} \cap \Gamma_{D}, \\[2pt]
  \Gamma_{h,ID}^{DG}         & = \Gamma_{h,I}^{DG} \cup \Gamma_{h,\text{dir}}^{DG}, &\qquad
  \Gamma_{h,ID}^{FV}         & = \Gamma_{h,I}^{FV} \cup \Gamma_{h,\text{dir}}^{FV}, \\[2pt]
  \Gamma_{h,\partial-}^{DG}  & = \Gamma_{h}^{DG} \cap \Gamma_{\text{inflow}}, &\qquad
  \Gamma_{h,\partial-}^{FV}  & = \Gamma_{h}^{FV} \cap \Gamma_{\text{inflow}}.
\end{alignat*}

For the FV region, we require a so-called two-point flux (TPFA) admissible mesh \cite{EYMARD200131}.
\begin{assumption}[TPFA admissibility]\label{ass:tpfa-admissible}
Let \(\mathcal{T}_h^{FV}\) be a mesh for the finite volume region, and $V,W\in \mathcal{T}_h^{FV}$. We say that \(\mathcal{T}_h^{FV}\) is an admissible mesh, in the classical two-point flux finite volume sense if:
\begin{enumerate}
  \item (Cell centers and orthogonality) There exist points \(\{x_V\}_{V\in\mathcal{T}_h^{FV}}\) with \(x_V\in V\) such that, for every interior edge \(\gamma=\partial V\cap\partial W\) with \(V\neq W\), the line segment between \( x_V \) and \( x_W \) is orthogonal to \(\gamma\).
  
  \item (Boundary edges) For each boundary edge \(\gamma=\partial V\cap\partial\Omega\), one has \(x_V\notin\gamma\). Define \(y_\gamma\) as the orthogonal projection of \(x_V\) onto \(\gamma\).% (This requirement may be relaxed; see Remark~1, Section~3.)
\end{enumerate}
\end{assumption}
	Although Assumption~\ref{ass:tpfa-admissible} may appear to be restrictive, robust unstructured Voronoi generators are well established~\cite{ebeida2011uniform,weyer2002automatic}.  In addition, enforcing Assumption~\ref{ass:tpfa-admissible} ensures optimal convergence of the CCFV method. Using meshes that do not satisfy the assumption forfeits any convergence guarantee. Fig.~\ref{fig:mesh_ex} has an examples Voronoi meshes. An unstructured Voronoi mesh is given in Fig.~\ref{figmesh:1}, and Fig.~\ref{figmesh:2} has an example FV/DG partitioning.
\begin{figure}[H]
  \centering

  \begin{subfigure}[t]{0.49\textwidth}
    \centering
\includegraphics[scale=0.4]{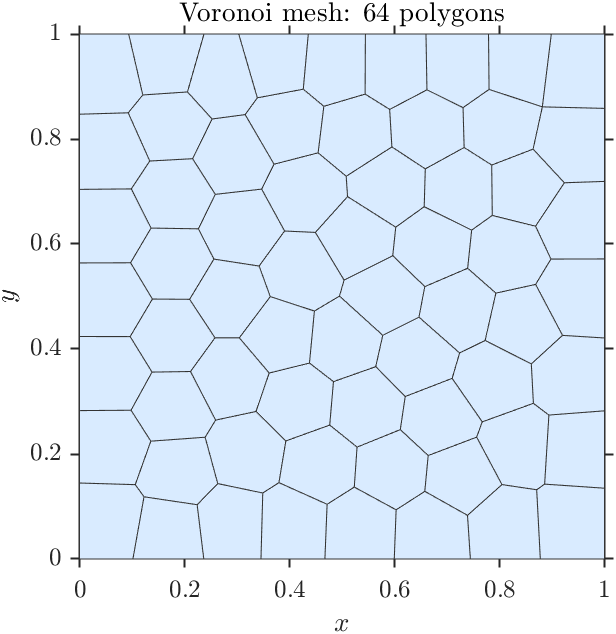}
    \subcaption{Example of Voronoi mesh.}\label{figmesh:1}
  \end{subfigure} %~ %\hfill
  \hspace*{-14ex}
  \begin{subfigure}[t]{0.49\textwidth}
    \centering
    % \resizebox{\linewidth}{!}{ ... }  % optional autosize
\begin{tikzpicture}[scale=4.1]
  % styles
  \tikzset{
    grid/.style={step=1/6, line width=0.4pt, gray!65},
    boundary/.style={line width=0.9pt}
  }

  % --- fill 5 nonoverlapping subdomains, each a union of 1/6×1/6 squares ---
  % Ω1: a central plus-shape (7 cells)
  \foreach \i/\j in {1/2, 2/2, 3/2, 4/2, 2/1, 2/3, 2/4}{
    \fill[blue!28] (\i/6,\j/6) rectangle ++(1/6,1/6);
  }

  % Ω2: top-left non-rectangular cluster (7 cells)
  \foreach \i/\j in {0/3, 1/3, 0/4, 1/4, 0/5, 1/5, 2/5}{
    \fill[orange!28] (\i/6,\j/6) rectangle ++(1/6,1/6);
  }

  % Ω3: top-right L-shape (8 cells)
  \foreach \i/\j in {3/4, 3/5, 4/3, 4/4, 4/5, 5/3, 5/4, 5/5}{
    \fill[orange!28] (\i/6,\j/6) rectangle ++(1/6,1/6);
  }

  % Ω4: bottom-left L-shape (6 cells)
  \foreach \i/\j in {0/0, 0/1, 0/2, 1/0, 1/1, 2/0}{
    \fill[orange!28] (\i/6,\j/6) rectangle ++(1/6,1/6);
  }

  % Ω5: remaining non-rectangular cluster (8 cells)
  \foreach \i/\j in {3/0, 4/0, 5/0, 3/1, 4/1, 5/1, 5/2, 3/3}{
    \fill[orange!28] (\i/6,\j/6) rectangle ++(1/6,1/6);
  }

%\fill (0.3,0.6) circle (0.5pt);            % solid dot 
%\draw[fill=black] (0.3,0.6) circle (0.1pt);  % hollow dot

  % --- overlay the 6×6 Cartesian grid and outer boundary ---
  \draw[grid] (0,0) grid (1,1);
  \draw[boundary] (0,0) rectangle (1,1);

  % --- optional labels for subdomains ---
  \node at (0.40,0.42) {$\Omega_{DG}$};
%  \node at (0.17,0.80) {$\Omega_2$};
%  \node at (0.82,0.82) { };
  \node at (0.12,0.10) {$\Omega_{FV}$};
%  \node at (0.82,0.22) { };

  \draw[step=1/6, gray!65, line width=0.4pt] (0,0) grid (1,1);
  \draw[line width=0.8pt] (0,0) rectangle (1,1);
  \tikzset{dot/.style={circle, fill=black, inner sep=1.2pt}}

  % choose base square indices (0..5) and an adjacent offset (±1,0) or (0,±1)
  \def\i{4}\def\j{2}      % base square
  \def\di{1}\def\dj{0}    % neighbor: right (use -1,0 for left; 0,1 up; 0,-1 down)

  \coordinate (C1) at ({(\i+0.5)/6},{(\j+0.5)/6});
  \coordinate (C2) at ({(\i+\di+0.5)/6},{(\j+\dj+0.5)/6});

  \node[dot, label=above:$x_W$] at (C1) {};
  \node[dot, label=below:$x_V$] at (C2) {};
  \draw[dashed, line width=0.6pt] (C1) -- (C2);                  % thicker
\end{tikzpicture}
    \subcaption{Example FV/DG partition.}\label{figmesh:2}
  \end{subfigure}

  \caption{Example Voronoi meshes and illustration of FV (orange) and DG regions (blue), as well as the $K$-orthogonality property.}
  \label{fig:mesh_ex}
\end{figure}	
	
%\begin{figure}[H]
%\centering
%\includegraphics[scale=0.4]{voronoi_mesh_example.png}
%  \caption{Example of a $K$-orthogonal mesh (Voronoi).}
%  \label{figmesh:1}
%\end{figure} 
%\paragraph{Two–point FV admissibility (FV nodes and orthogonality).}
%For each FV cell $V$ choose a point $x_V\in V$ (the FV “node”). The FV mesh is
%\emph{admissible} in the two–point sense if:
%(i) for every interior FV edge $\gamma=\partial V\cap\partial W$ with $V\neq W$,
%the segment joining $x_V$ and $x_W$ is orthogonal to $\gamma$; and
%(ii) for every boundary FV edge $\gamma=\partial V\cap\partial\Omega$, the point $x_V$
%does not lie on $\gamma$. In this case, let $y_\gamma$ be the (unique) point where the line
%through $x_V$ orthogonal to $\gamma$ meets $\gamma$.

For any facet $\gamma$ fix a unit normal ${\bm n}_\gamma$. On $\partial\Omega$ take ${\bm n}_\gamma$ outward; on $\Gamma_h^{FV-DG}$ orient ${\bm n}_\gamma$ from the DG side into the FV side; on interior edges choose either orientation consistently.  When $\gamma=\partial V\cap\partial W$ lies in $\Gamma_h^{DG}$, set $h_\gamma := \max\{\mathrm{diam}(V),\mathrm{diam}(W)\}$ (used in DG penalties).

%\paragraph{Distances associated with FV edges.}
Define the scalar $d_\gamma$ for each facet:
\[
d_\gamma :=
\begin{cases}
\|x_V - x_W\|_2, & \gamma=\partial V\cap\partial W \in \Gamma_{h,I}^{FV},\\[2pt]
\mathrm{dist}(x_V,\gamma)=\|x_V-y_\gamma\|_2, & \gamma=\partial V\cap\partial\Omega \in \Gamma_{h,\partial}^{FV},\\[2pt]
\|x_V-y_\gamma\|_2, & \gamma=\partial V\cap\partial W \in \Gamma_h^{FV-DG},
\end{cases}
\]
and assume a uniform shape-regularity: there exists $\theta>0$ such that
\[
\begin{aligned}
&\gamma=\partial V\cap\partial W\in \Gamma_{h,I}^{FV}:\quad d_\gamma \ge \theta\,\max(h_V,h_W),\\
&\gamma=\partial V\cap\partial\Omega\in \Gamma_{h,\partial}^{FV}:\quad d_\gamma \ge \theta\,h_V,\\
&\gamma=\partial V\cap\partial W\in \Gamma_h^{FV-DG}:\quad d_\gamma \ge \theta\,h_V.
\end{aligned}
\]

%%\paragraph{Edgewise harmonic averages of $K$.}
%%For each FV or FV–DG interface facet, define the harmonic average taken along the orthogonal segment used in the two–point flux:
%%For each FV (or FV-DG interface) facet $\gamma$, define the harmonic average along the normal line used by the two–point flux, i.e., the straight line orthogonal to $\gamma$ connecting the relevant FV points (interior) or the FV point to the footpoint on $\gamma$ (boundary/interface).
%In TPFA, averaging is performed along the normal line. For interior facets $\gamma=\partial V\cap\partial W$, this is the line segment joining $x_V$ and $x_W$ orthogonal to $\gamma$. For boundary or FV-DG interface facets, it is the line from $x_V$ to the orthogonal projection $y_\gamma\in\gamma$.
%\[
%K_\gamma :=
%\begin{cases}
%\displaystyle
%d_\gamma\Bigl(\int_{x_V}^{\,x_W} \tfrac{\mathrm{d}s}{K(s)}  \Bigr)^{-1},
%& \gamma=\partial V\cap\partial W \in \Gamma_{h,I}^{FV},\\[10pt]
%\displaystyle
%d_\gamma\Bigl(\int_{x_V}^{\,y_\gamma} \tfrac{\mathrm{d}s}{K(s)}  \Bigr)^{-1},
%& \gamma\in \Gamma_{h,\partial}^{FV}\ \text{or}\ \gamma\in \Gamma_h^{FV-DG}.
%\end{cases}
%\]
%These $K_\gamma$ inherit the same upper/lower bounds as $K$.  The finite volume literature often refers to $(|\gamma|/d_\gamma)K_\gamma$ as the TPFA face transmissibility~\cite{EYMARD2000713}.

For each FV facet $\gamma$ (including FV--DG interface facets), let $L_\gamma$ denote the
normal line used by the two--point flux (the straight line orthogonal to $\gamma$):
\[
L_\gamma \;=\;
\begin{cases}
[x_V,x_W], & \gamma=\partial V\cap\partial W \in \Gamma_{h,I}^{F},\\[3pt]
[x_V,y_\gamma],\ \ y_\gamma:=\operatorname{proj}_\gamma(x_V), 
& \gamma\in \Gamma_{h,\partial}^{F}\ \text{or}\ \gamma\in \Gamma_h^{DF},
\end{cases}
\]
and set $d_\gamma := |L_\gamma|$. 
The harmonic normal average along $L_\gamma$ is
\[
K_\gamma \;:=\; d_\gamma \Biggl(\int_{L_\gamma} \frac{\mathrm{d}s}{K(s)}   \Biggr)^{-1}.
\]
These $K_\gamma$ inherit the same upper/lower bounds as $K$.  The finite volume literature often refers to $(|\gamma|/d_\gamma)K_\gamma$ as the TPFA face transmissibility~\cite{EYMARD2000713}.
%For tensor coefficients $\nu(x)$, replace $K(s)$ by the normal projection 
%$n_\gamma^\top \nu(s)\,n_\gamma$:
%\[
%K_\gamma \;:=\; d_\gamma \Biggl(\int_{L_\gamma} 
%\frac{1}{\,n_\gamma^\top \nu(s)\,n_\gamma\,}\,\mathrm{d}s \Biggr)^{-1}.
%\]
% (Optional) The face transmissibility is $T_\gamma = \dfrac{|\gamma|}{d_\gamma}\,K_\gamma$.
 
\section{CCFV, DG, and coupled discretization}
 
% =========================
% Function spaces
% =========================
%\paragraph{Discrete space.}
For a fixed polynomial degree \(k\ge 1\) in the DG region, define
\[
X_h
:= \bigl\{\, v\in L^2(\Omega)\ :\ v|_W\in \mathcal{P}_k(W)\ \,\forall W\in\mathcal T_h^{DG},\ 
                                  v|_V\in \mathcal{P}_0(V)\ \,\forall V\in\mathcal T_h^{FV} \bigr\},
\]
 where \(\mathcal{P}_k(W)\) denotes the space of polynomials on \(W\) of total degree at most \(k\). 
 
  Functions in $X_h$ can be discontinuous across element interfaces, so we require some jump and average notations to provide meaning to trace values. To make the exposition more clear, given any edge $\gamma$, we fix a unit normal vector ${\bm n}_\gamma$ to $\gamma$.  More specifically, we assume that if $\gamma$ is a boundary facet (belongs to $\partial\Omega$), then ${\bm n}_\gamma$ points outward of $\partial\Omega$. If $\gamma$ belongs to the interface $\Gamma_h^{FV-DG},$ then we assume that ${\bm n}_\gamma$ points from the DG region into the FV region. In the following, put $W,V\in \mathcal{T}_h$ so that the vector ${\bm n}_\gamma$ points from $\partial V$ into $\partial W$. Given $u \in X_h$, the jump of $u$ is defined to be:
\[
\begin{aligned}
& \gamma \in\Gamma_{h,I}^{DG} :\quad
  [u]_\gamma := u|_{V} - u|_{W},\qquad
  %\{u\}_\gamma := \tfrac12\bigl(u|_{V}+u|_{W}\bigr),
  \\[2pt]
& \gamma\in\Gamma_{h,\partial}^{DG} :\quad
  [u]_\gamma := u|_{V},\qquad
 % \{u\}_\gamma := u|_{V},
 \\[2pt]
& \gamma \in\Gamma_{h,I}^{FV} 
:\quad
  [u]_\gamma := u(x_V) - u(x_W),
  \\[2pt]
& \gamma \in\Gamma_{h,\partial}^{FV} :\quad
  [u]_\gamma := u(x_V),
  \\[2pt]
& \gamma \in\Gamma_{h}^{FV-DG} :\quad
  y_\gamma:=\operatorname{proj}_\gamma(x_V),\quad
  [u]_\gamma := u|_{\Omega_{DG}}(y_\gamma) - u|_{\Omega_{FV}}(x_V).
\end{aligned}
\]
The IPDG method requires the notion of the average of $u\in X_h$.  Suppose $\gamma$ is an interior facet in the DG region, with $\gamma = E_1\cap E_2$, and $E_1,E_2\in \mathcal{T}_h^{DG}$. We then define
\[ 
\{u\}_\gamma := \tfrac12\bigl(u|_{E_1}+u|_{E_2}\bigr).
\]
Similarly, if $\gamma$ is a boundary facet in the DG region, with $\gamma \subset \partial E_1 $, and $E_1 \in \mathcal{T}_h^{DG}$, we define
\[ 
\{u\}_\gamma :=  u|_{E_1} .
\]
Given an interior facet $\gamma$, shared by elements $V$ and $W$ (with ${\bm n}_\gamma$ oriented from $V$ to $W$), we define the upwind value of $u$ to be
\[
u^{\uparrow} =
\begin{cases}
u|_V &\mbox{if } \vec{\beta}\cdot {\bm n}_\gamma\ge 0,
\\
u|_W&\mbox{if } \vec{\beta}\cdot {\bm n}_\gamma <0.
\end{cases} 
\]
% =========================
% Bilinear forms
% =========================
For $E_1,E_2\in \mathcal{T}_h^{DG}$, let $\gamma=\partial E_1 \cap \partial E_2$, and $h_\gamma = \text{max}(\text{diam}(V), \text{diam}(W))$.  With parameters \(\sigma>0\) and \(\epsilon\in\{-1,0,1\}\), define the following bilinear forms for \(u,v\in X_h\):
\begin{align*}
 a_{DG}(u,v) &=
\sum_{W\in\mathcal{T}_h^{DG}} \int_W K\,\nabla u\cdot\nabla v
%+ \sum_{\gamma\in\Gamma_h^{DG}} \int_\gamma u^{\uparrow}[v]\,(\vec{\beta}\cdot n_\gamma)
- \sum_{\gamma\in\Gamma_{h,ID}^{DG}} \int_\gamma \{K\nabla u\cdot n_\gamma\}[v]
+ \epsilon \sum_{\gamma\in\Gamma_{h,ID}^{DG}} \int_\gamma \{K\nabla v\cdot n_\gamma\}[u]
\\
&+ \sum_{\gamma\in\Gamma_{h,ID}^{DG}} \frac{\sigma}{h_\gamma}\int_\gamma [u][v]
-  \sum_{W\in\mathcal{T}_h^{DG}} \int_W  (\vec{\beta}\cdot \nabla v) u
+
\sum_{\gamma \in\Gamma_{h,I}^{DG}} \int_\gamma (\vec{\beta}\cdot {\bm{n}_\gamma}) u^\uparrow [v]_\gamma
\\
&+
\sum_{\gamma \in\Gamma_{h,\partial-}^{DG}} \int_\gamma (\vec{\beta}\cdot {\bm{n}_\gamma}) u v
+
\sum_{W\in\mathcal{T}_h^{DG}} \int_W c u v,
\end{align*}
\begin{align*}
 a_{FV}(u,v) &= 
\sum_{\gamma\in\Gamma_{h,ID}^{FV}} \frac{|\gamma|}{d_\gamma}\,K_\gamma [u]_\gamma [v]_\gamma,
+
\sum_{\gamma\in\Gamma_{h,I}^{FV}} \int_\gamma (\vec{\beta}\cdot {\bm{n}_\gamma}) u^\uparrow [v]_\gamma
+
\sum_{\gamma\in\Gamma_{h,\partial-}^{FV}} \int_\gamma (\vec{\beta}\cdot {\bm{n}_\gamma}) uv
\\
&+
 \sum_{V\in \mathcal{T}_h^{FV}} \int_V c uv,
\\
a_{FG}(u,v) &= 
\sum_{\gamma\in\Gamma_h^{FV-DG}} \frac{|\gamma|}{d_\gamma}\,K_\gamma [u]_\gamma [v]_\gamma
+
\sum_{\gamma\in\Gamma_h^{FV-DG}} \int_\gamma (\vec{\beta}\cdot {\bm{n}_\gamma}) u^\uparrow [v]_\gamma,
\\
a(u,v) &= a_{DG}(u,v) + a_{FV}(u,v) + a_{FG}(u,v) .
\end{align*}
For the linear form, we have
\begin{align*}
\ell(v)
&=
\epsilon
\sum_{\gamma\in \Gamma_{h,\text{dir}}^{DG}} \int_\gamma \frac{\sigma}{h_\gamma}  (\nabla v \cdot {\bm{n}_\gamma})  g_D
+
\sum_{\gamma\in \Gamma_{h,\text{dir}}^{DG}} \int_\gamma \frac{\sigma}{h_\gamma} g_D v
-
\sum_{\gamma\in \Gamma_{h,\partial-}^{DG}} \int_\gamma (\vec{\beta}\cdot {\bm{n}_\gamma}) g_D v
\\
&+ 
\sum_{W\in\mathcal{T}_h^{DG}} \int_W f v
+
\sum_{\gamma\in \Gamma_{h,\text{dir}}^{FV}} \frac{|\gamma|}{d_\gamma} K_\gamma g_D v|_\gamma
-
\sum_{\gamma\in \Gamma_{h,\partial-}^{FV}} \int_\gamma (\vec{\beta}\cdot {\bm{n}_\gamma}) g_D v.
\end{align*}
	The discrete coupled FV-DG scheme for \eqref{model_problem1} is: find $u_h\in X_h$ so that
\begin{align}
a(u_h,v_h) = \ell(v_h),\quad \forall v_h \in X_h.
\label{eq:discrete-problem}
\end{align}
	If \( \mathcal{T}_h^{DG}=\emptyset \), then~\eqref{eq:discrete-problem} reduces to the classical cell-centered finite-volume method (CCFV).  Similarly, if \( \mathcal{T}_h^{FV}=\emptyset \), then~\eqref{eq:discrete-problem} reduces to an interior-penalty DG (IPDG) method with upwinding for the convection.  In Section~\ref{sec:numerics}, we compare and contrast the three schemes: full CCFV \( (\mathcal{T}_h^{DG}=\emptyset) \), full DG \( (\mathcal{T}_h^{FV}=\emptyset) \), and FV--DG \( (\mathcal{T}_h^{FV}\neq\emptyset,\ \mathcal{T}_h^{DG}\neq\emptyset) \).
%\clearpage  
\section{Description of adaptive technique} \label{sec:adapt}  
	Here we describe the bound-preserving mechanism used in this paper.  The method runs in two phases: 
\begin{enumerate}[(i)]
\item perform standard $h$-adaptivity until the accuracy estimator meets tolerance; 
\item repeatedly scan cell averages against the admissible interval $I=[\,u_*-\delta_{\text{under}},\,u^*+\delta_{\text{over}}\,]$. 
\end{enumerate}	 Any violating cells (and their facet/vertex neighbors, see Fig.~\ref{fig_algo_descript}) are reassigned to FV, and update the partition: $\mathcal T_h^{FV} \cup \mathcal T_h^{DG}$. The FV and DG regions are updated until the bound-violation metric falls below tolerance, then apply a conservative slope limiter on the DG region.  We also consider a slight modification that scans nodal values instead of cell averages (see Variant 2 in subsection~\ref{sec:var2}).
 	 
\subsection{Variant 1: Bound-driven FV/DG partitioning (fixed slack)}\label{sec:var1}
Pick slack parameters $\delta_{\text{under}},\delta_{\text{over}}>0$ and tolerances
$\varepsilon_h,\varepsilon_{\mathrm{bp}}>0$. Define the admissible interval
$I=[\,u_*-\delta_{\text{under}},\,u^*+\delta_{\text{over}}\,]$ and let
$\mathcal{N}(E)$ denote the set that contains the facet and vertex neighbors of $E$. 
\begin{enumerate}
  \item \textbf{Accuracy phase ($h$-adapt).} While $\eta_h(\mathcal T_h)>\varepsilon_h$,
        refine/coarsen $\mathcal T_h$ and recompute $u_h$.

  \item \textbf{Bound-screened FV/DG partitioning.} Repeat \emph{until}
        $\eta_{\mathrm{bp}}\le \varepsilon_{\mathrm{bp}}$:
    \begin{enumerate}
      \item Compute cell averages $\overline{u}_h|_E$ for all $E\in\mathcal T_h$.
      \item Form the violation set $V:=\{\,E:\ \overline{u}_h|_E\notin I\,\}$ and expand
            $V \leftarrow V \cup \mathcal{N}(V)$.
      \item Set $\mathcal T_h^{FV}\leftarrow V$ and
            $\mathcal \mathcal T_h^{DG}\leftarrow \mathcal T_h\setminus V$; recompute $u_h$ and evaluate
            \[
              \eta_{\mathrm{bp}}
              := \max_{E\in\mathcal T_h}\textrm{dist}(\overline{u}_h|_E, I),
              \qquad
              \operatorname{dist}(x,I):= \max\{\,u_* - x,\ 0,\ x - u^*\,\}.
            \]
    \end{enumerate} 
  \item \textbf{DG limiting.} Apply a conservative slope limiter on $\mathcal T_h^{D}$.
\end{enumerate}
 
\subsection{Variant 2: Node-screened FV/DG partitioning.}\label{sec:var2}
Let $I=[\,u_*-\delta_{\text{under}},\,u^*+\delta_{\text{over}}\,]$ and, for each cell $E\in\mathcal T_h$, let
$\mathcal X(E)$ denote the set of nodal points to be screened (e.g., the vertices of $E$, or quadrature points). Define
\[
\eta_{\mathrm{bp}}^{\mathrm{node}}
:= \max_{E\in\mathcal T_h}\ \max_{x\in \mathcal X(E)}
   \operatorname{dist}(u_h|_E(x),\, I),
\qquad
\operatorname{dist}(z,I):=\max\{\,u_* - z,\ 0,\ z - u^*\,\}.
\] 
\begin{enumerate}
  \item \textbf{Accuracy phase ($h$-adapt).}
        While $\eta_h(\mathcal T_h)>\varepsilon_h$, refine/coarsen $\mathcal T_h$ and recompute $u_h$.

  \item \textbf{Bound-screened FV/DG partitioning (node-based).} Repeat \emph{until}
        $\eta_{\mathrm{bp}}^{\mathrm{node}}\le \varepsilon_{\mathrm{bp}}$:
    \begin{enumerate}
      \item For every $E\in\mathcal T_h$, evaluate $u_h|_E(x)$ at all $x\in\mathcal X(E)$.
      \item Form the violation set
            \[
              V := \Bigl\{\,E\in\mathcal T_h:\ \exists\,x\in\mathcal X(E)\ \text{s.t.}\ 
              u_h|_E(x)\notin I \Bigr\}.
            \]
      \item Expand $V \leftarrow V \cup \mathcal{N}(V)$ and set
            $\mathcal T_h^{FV}\leftarrow V$, \quad $\mathcal T_h^{DG}\leftarrow \mathcal T_h\setminus V$.
      \item Recompute $u_h$ on the new partition and update
            $\eta_{\mathrm{bp}}^{\mathrm{node}}$.
    \end{enumerate}
  \item \textbf{DG limiting.} Apply a conservative slope limiter on $\mathcal T_h^{DG}$.
\end{enumerate} 
\begin{figure}[H]
\centering  
\begin{tikzpicture}[scale=4]
  % styles
  \tikzset{
    face/.style={line width=0.6pt},
    site/.style={circle, fill=none, inner sep=0.4pt}
  }

  % unit square boundary
  \draw[face] (0,0) rectangle (1,1);

  % Voronoi faces: vertical lines at x = k/8, k=1..7
  \foreach \k in {1,...,7} \draw[face] (\k/8,0) -- (\k/8,1);

  % Voronoi faces: horizontal lines at y = k/8, k=1..7
  \foreach \k in {1,...,7} \draw[face] (0,\k/8) -- (1,\k/8);

  % ------------ highlight one element and its vertex-neighbors ------------
  % pick the target cell by its grid indices (ix,iy) with 0<=ix,iy<=7
  \def\ix{4}\def\iy{4} % this is the cell between x in [0.5,0.625], y in [0.5,0.625]

  % main element (blue)
  \fill[blue!85,opacity=0.35]
    (\ix/8,\iy/8) rectangle ({(\ix+1)/8},{(\iy+1)/8});

% --- label the blue element (centered) ---
\coordinate (Ecenter) at ({(\ix+0.5)/8},{(\iy+0.5)/8});
\node[font=\scriptsize, fill=none, inner sep=1pt, text=black]
      at (Ecenter) {$E$};

  % all vertex-neighbor cells (green): all 8 neighbors around (ix,iy)
  \foreach \dx in {-1,0,1}{
    \foreach \dy in {-1,0,1}{
      \ifnum\dx=0\relax\ifnum\dy=0\relax
        % skip the center cell itself
      \else
        \fill[green!35,opacity=0.35]
          ({(\ix+\dx)/8},{(\iy+\dy)/8})
          rectangle
          ({(\ix+\dx+1)/8},{(\iy+\dy+1)/8});
      \fi\else
        \fill[green!35,opacity=0.35]
          ({(\ix+\dx)/8},{(\iy+\dy)/8})
          rectangle
          ({(\ix+\dx+1)/8},{(\iy+\dy+1)/8});
      \fi
    }
  }

  % 64 sites at centers of the 8×8 blocks: ((2i-1)/16, (2j-1)/16)
  \foreach \i in {1,...,8}{
    \foreach \j in {1,...,8}{
      \node[site] at ({(2*\i-1)/16},{(2*\j-1)/16}) {};
    }
  }
\end{tikzpicture}
\caption{Element $E$ marked for adaptivity (blue), and its vertex neighborhood $\mathcal{N}(E)$ (green).}
\label{fig_algo_descript}
\end{figure}
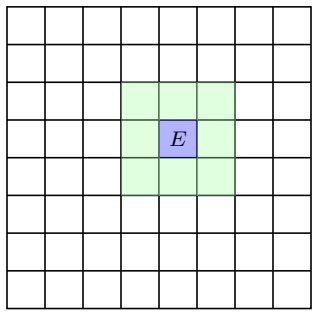
  \noindent\emph{Remark 1.}  Step~3 may be skipped if it is enough that the nodal values on $\mathcal X(E)$ are bound-preserving. For $k>1$ DG, nodal bounds do not preclude between node overshoots; $u_h|_E(x)$ can violate the bounds for $x\in E\setminus \mathcal X(E)$.
\\[12pt]
 \noindent\emph{Remark 2.} Through various numerical tests, we find that taking $\mathcal{N}(E)$ to be the vertex neighbors of $E$ provides good results. 
 
\subsection{Slope limiter} \label{sec:limiter}
To make the exposition more clear, let $\Omega \subset\mathbb{R}^2$.  We use a vertex limiter from \cite{aizinger2017anisotropic}.  For an element \(E\in\mathcal T_h\) with centroid \(c_0=(x_0,y_0)\) and vertices
\(\{v_i=(x_i,y_i)\}_{i=1}^M\), the piecewise linear DG solution is
\[
u_h|_E(x,y) \;=\; \bar u_E \;+\; u_x\,(x-x_0) \;+\; u_y\,(y-y_0),
\]
where  \((u_x,u_y)=\nabla u_h|_E\) is constant on \(E\).  For each vertex \(v_i\), let \(\omega(v_i)\) be the patch of elements that share \(v_i\).
Define vertexwise bounds
\[
u_i^{\min} \;=\; \min_{L\in \omega(v_i)} \bar u_L, \qquad
u_i^{\max} \;=\; \max_{L\in \omega(v_i)} \bar u_L.
\]
If \(v_i\) lies on a Dirichlet boundary, also include the boundary value \(g_D(v_i)\) when forming \(u_i^{\min},u_i^{\max}\).  The objective of the limiter is to construct a new polynomial
\[
\tilde{u}_h|_E(x,y) \;=\; \bar u_E \;+\; \alpha_x\,u_x\,(x-x_0) \;+\; \alpha_y\,u_y\,(y-y_0),
\qquad \alpha_x,\alpha_y\in[0,1],
\]
such that the vertex values satisfy the discrete maximum principle
\[
u_i^{\min} \;\le\; \tilde{u}_h(v_i) \;\le\; u_i^{\max} \quad \text{for all } i=1,\dots,M.
\]
The scalars \(\alpha_x\) and \(\alpha_y\) are computed as:
\begin{enumerate} 
\item  Choose \(\alpha_x\in[0,1]\) so that
\(u_i^{\min} \le u_0 + \alpha_x\,u_x\,(x_i-x_0) \le u_i^{\max}\) for all \(i\).
Define nodal factors
\[
\alpha_{x,i} \;=\;
\begin{cases}
\min\!\Bigl(1,\;\dfrac{u_i^{\max}-u_0}{\,u_x\,(x_i-x_0)\,}\Bigr), & u_x\,(x_i-x_0) > 0,\\[6pt]
1, & u_x\,(x_i-x_0) = 0,\\[6pt]
\min\!\Bigl(1,\;\dfrac{u_i^{\min}-u_0}{\,u_x\,(x_i-x_0)\,}\Bigr), & u_x\,(x_i-x_0) < 0,
\end{cases}
\qquad
\alpha_x \;=\; \min_{1\le i\le M}\alpha_{x,i}.
\]
Set the prelimiting polynomial \(\hat u_h(x,y)=u_0+\alpha_x u_x(x-x_0)\) and its vertex values
\(\hat u_i=\hat u_h(v_i)\), which are used in step 2.

\item Choose \(\alpha_y\in[0,1]\) so that
\(u_i^{\min} \le \hat u_i + \alpha_y\,u_y\,(y_i-y_0) \le u_i^{\max}\) for all \(i\).
Define
\[
\alpha_{y,i} \;=\;
\begin{cases}
\min\!\Bigl(1,\;\dfrac{u_i^{\max}-\hat u_i}{\,u_y\,(y_i-y_0)\,}\Bigr), & u_y\,(y_i-y_0) > 0,\\[6pt]
1, & u_y\,(y_i-y_0) = 0,\\[6pt]
\min\!\Bigl(1,\;\dfrac{u_i^{\min}-\hat u_i}{\,u_y\,(y_i-y_0)\,}\Bigr), & u_y\,(y_i-y_0) < 0,
\end{cases}
\qquad
\alpha_y \;=\; \min_{1\le i\le M}\alpha_{y,i}.
\]

\item Form the limited polynomial:
\[
\bar u_h|_E(x,y) \;=\; \bar u_E \;+\; \alpha_x\,u_x\,(x-x_0) \;+\; \alpha_y\,u_y\,(y-y_0).
\] 
\end{enumerate}
  
\begin{theorem}
Let $u_h$ be the solution to the FV-DG problem in~\eqref{eq:discrete-problem} after application of the adaption technique Variant 1 from Section~\ref{sec:adapt}).  Then, the cell-averages of $u_h$ are bound-preserving up to some tolerance $\delta$.
\end{theorem}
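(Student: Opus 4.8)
The plan is to reduce the claim to a termination statement and then establish termination by exhibiting the worst-case partition. The first observation is that the loop in Step~2 of Variant~1 exits \emph{precisely} when
\[
\eta_{\mathrm{bp}} \;=\; \max_{E\in\mathcal T_h}\operatorname{dist}\bigl(\overline u_h|_E,\,I\bigr)\ \le\ \varepsilon_{\mathrm{bp}},
\qquad I = [\,u_*-\delta_{\text{under}},\ u^*+\delta_{\text{over}}\,],
\]
and unwinding $\operatorname{dist}(\cdot,I)$, this is exactly the statement that $u_*-\delta_{\text{under}}-\varepsilon_{\mathrm{bp}} \le \overline u_h|_E \le u^*+\delta_{\text{over}}+\varepsilon_{\mathrm{bp}}$ for every $E\in\mathcal T_h$. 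So whenever the procedure halts, its output already has bound-preserving cell averages with tolerance $\delta := \max\{\delta_{\text{under}},\delta_{\text{over}}\}+\varepsilon_{\mathrm{bp}}$ (equivalently, tolerance $\varepsilon_{\mathrm{bp}}$ relative to the slackened interval $I$). Hence the entire content of the theorem reduces to showing that the loop performs only finitely many passes.

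For that I would use the fact that the FV region grows monotonically along the iteration: each pass adjoins the current violation set $V$ and its neighborhood $\mathcal N(V)$ to $\mathcal T_h^{FV}$, and no element is ever returned to the DG side; the $\mathcal N(V)$-expansion can only enlarge $\mathcal T_h^{FV}$, and it is bounded by $\mathcal T_h$. Since $\mathcal T_h$ is a finite mesh, the sequence of FV regions stabilizes after at most $|\mathcal T_h|$ passes. If the loop has not exited by then, it must have reached $\mathcal T_h^{FV}=\mathcal T_h$, i.e.\ $\mathcal T_h^{DG}=\emptyset$; by the remark following~\eqref{eq:discrete-problem}, in this case the coupled form $a(\cdot,\cdot)$ and right-hand side $\ell(\cdot)$ collapse to the classical cell-centered two-point-flux finite volume method on all of $\Omega$.

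To finish I would invoke the discrete maximum principle for that CCFV scheme. Under Assumption~\ref{ass:tpfa-admissible} the harmonic face averages $K_\gamma$ inherit the positive two-sided bounds of $K$, so each diffusive coupling $(|\gamma|/d_\gamma)K_\gamma$ produces a nonpositive off-diagonal entry and a positive diagonal contribution; the upwind convective fluxes also produce nonpositive off-diagonals, $c\ge 0$ only strengthens the diagonal, and the Dirichlet facets render the system matrix strictly diagonally dominant on the corresponding rows. The matrix is thus a monotone M-matrix, and the standard maximum-principle argument (see, e.g., \cite{EYMARD2000713}) yields $u_* \le u_h(x_V) \le u^*$ for every $V\in\mathcal T_h^{FV}$, under the usual sign hypotheses that make $[u_*,u^*]$ the correct range for the continuous problem (a nonnegative source, $c\ge0$, and $u_*\le g_D\le u^*$ on $\Gamma_D$). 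Since the FV solution is piecewise constant, the cell average on $V$ equals $u_h(x_V)$, so $\overline u_h|_V\in[u_*,u^*]\subset I$; therefore $\eta_{\mathrm{bp}}=0\le\varepsilon_{\mathrm{bp}}$ and the loop exits. Combined with the previous paragraph, the loop always exits, and by the first paragraph its output is bound-preserving up to $\delta$.

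The step I expect to require the most care is the monotonicity of the FV region across passes. As literally written, Step~2(c) recomputes the violation set $V$ from scratch each time, so strict monotonicity of $\mathcal T_h^{FV}$ holds only under the (intended) reading in which the troubled cells are \emph{accumulated} into $\mathcal T_h^{FV}$; I would state this convention explicitly in the hypotheses. Without it, one would instead have to exclude limit cycles among mixed FV/DG partitions — a genuinely more delicate task — and the cleanest remedy is simply to keep the accumulating convention, after which finiteness is immediate. The M-matrix property of the TPFA scheme under Assumption~\ref{ass:tpfa-admissible} is classical, so I would cite it rather than reprove it, flagging only the sign conditions on $c$, $f$, and $g_D$ that it presupposes.
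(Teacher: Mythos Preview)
Your proposal is correct and follows essentially the same approach as the paper's two-sentence proof: termination is guaranteed because in the worst case the FV region exhausts the mesh, whereupon the scheme reduces to the monotone CCFV discretization (the paper simply cites \cite{godunov1959difference,EYMARD2000713} for that monotonicity). Your version is more thorough, and the caveat you raise about the accumulating convention for $\mathcal T_h^{FV}$ versus the literal reading of Step~2(c) is a genuine subtlety that the paper elides with the phrase ``expand the neighborhoods until.''
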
 
 \begin{proof}
 Variant 1 of the adaption technique is guaranteed to terminate in a finite number of steps because the pessimistic choice $\mathcal{T}_h^{DG} = \emptyset$ (expand the neighborhoods until $\mathcal{T}_h=\mathcal{T}_h^{DG}\cup \mathcal{T}_h^{FV} = \mathcal{T}_h^{FV} $) ensures the scheme reverts to CCFV on the entire mesh; and the CCFV discretization is monotone~\cite{godunov1959difference,EYMARD2000713}.
 \end{proof}
  \noindent\emph{Remark 3.} Empirically, we find that the DG region is typically sizable (30\%-90\% of mesh elements), though this varies with the tolerance $\delta$, the mesh size, and the underlying problem.

\section{Numerical experiments}  \label{sec:numerics} 
Unless otherwise stated, all experiments for DG have $k=1$ (piecewise linear approximation), $\epsilon=-1$ (symmetric IPDG) and $\sigma=14$.  All computational meshes are unstructured (Voronoi) generated with CGAL~\cite{fabri2009cgal} and PolyMesher~\cite{talischi2012polymesher}.  The meshes are matching at the interface between the FV and DG regions.

	Variant~1 from Section~\ref{sec:adapt} governs the cell averages, while the limiter in Section~\ref{sec:limiter} constrains the higher–order DG modes/slopes.	While distinct slack parameters for undershoot and overshoot are admissible, in the experiments we take a single value, $\delta := \delta_{\text{under}} = \delta_{\text{over}}$.  We take $f=c=0$ in the model PDE~\eqref{model_problem1}. A suite of standard benchmark problems is presented in Sections~\ref{sec:Triple}, \ref{num_L}, \ref{sec:badia}, and \ref{num_H}.  Section~\ref{num_M} summarizes the results and presents supplementary metrics: iteration counts, and the size of spurious oscillations.

%\clearpage
\section{Triple layer}  \label{sec:Triple}
	We consider a triple layer problem, which is a common benchmark for convection-dominated flows.  No exact solution can be easily written down for this problem. The domain is $\Omega=[0,2]^2$, and $K=10^{-6}$, with the advective field $\vec{\beta}=[y,(1-x)^2]^T$.  On the domain $\Gamma_N:= \{2\} \times (0,1)\cup (0,2) \times\{1\}$ homogeneous Neumann conditions are imposed.  For the remaining boundaries, Dirichlet conditions are imposed:
\[
g_D(x,y) =
\begin{cases}
1, \mbox{ if $x\in (1/8,1/2),~y=0$},\\
2,  \mbox{ if $x\in (1/2,3/4),~y=0$}, \\
0, \mbox{ elsewhere on $\Gamma_D\backslash\Gamma_N$. }
\end{cases}
\]  
Due to the small amount of diffusion the discontinuous profile is basically transported along the characteristic curves leading to sharp characteristic interior layers.  %The solution satisfies $0\le u \le 2$, but has regions where it is nearly constant with value two, one, or zero.  Simply enforcing $0\le \overline{u} \le 2$ may not completely eliminate spurious oscillations.

Baseline tests for this benchmark are conducted in Fig.~\ref{figT_1:sixgrid}.  The CCFV method is monotone and bound-preserving, at the cost of reduced feature sharpness. Conversely, the DG solution offers greater detail but is not bound-preserving.  Figs. \ref{figT_1:1c} and \ref{figT_1:1f} visualize the spatial distribution of DG cell average overshoots and undershoots. Mesh refinement alone does not eliminate violations in the cell average. %It should be noted that the adaptive technique merely enforces $0\le \overline{u} \le 2$, so the approximations may not be free of spurious oscillations.
\begin{figure}[H] % use [htbp] if you want more placement flexibility
  \centering 
  % ---- Row 1 ----
  \begin{subfigure}{0.32\textwidth}
    \includegraphics[width=\linewidth]{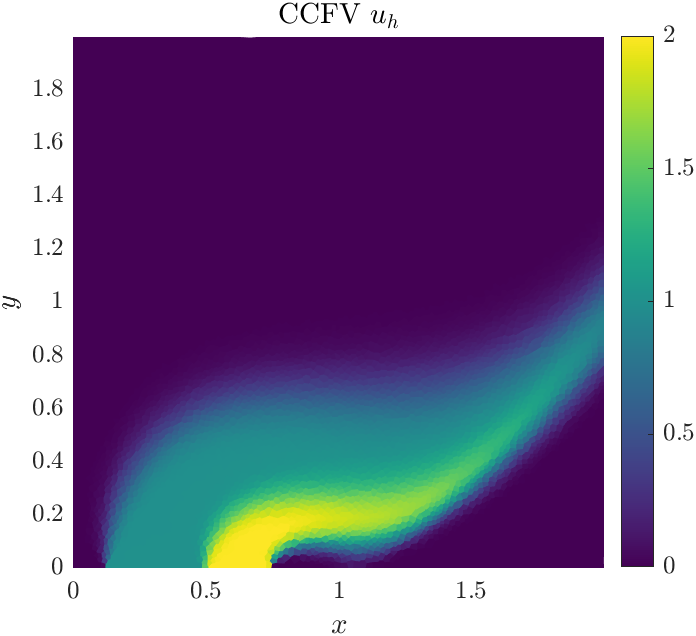}
    \subcaption{CCFV 10k elements}\label{fiTB_1:1a}
  \end{subfigure}\hfill
  \begin{subfigure}{0.32\textwidth}
    \includegraphics[width=\linewidth]{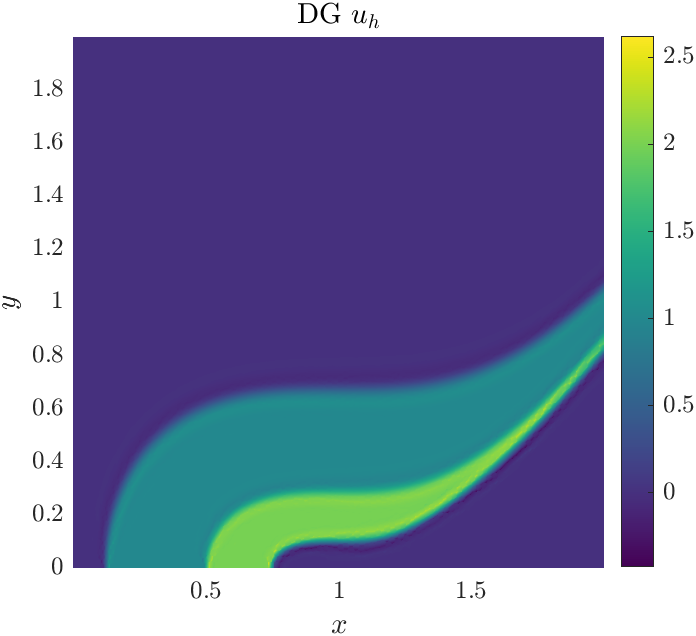}
    \subcaption{DG 10k elements}\label{figT_1:1b}
  \end{subfigure}\hfill
  \begin{subfigure}{0.34\textwidth}
    \includegraphics[width=\linewidth]{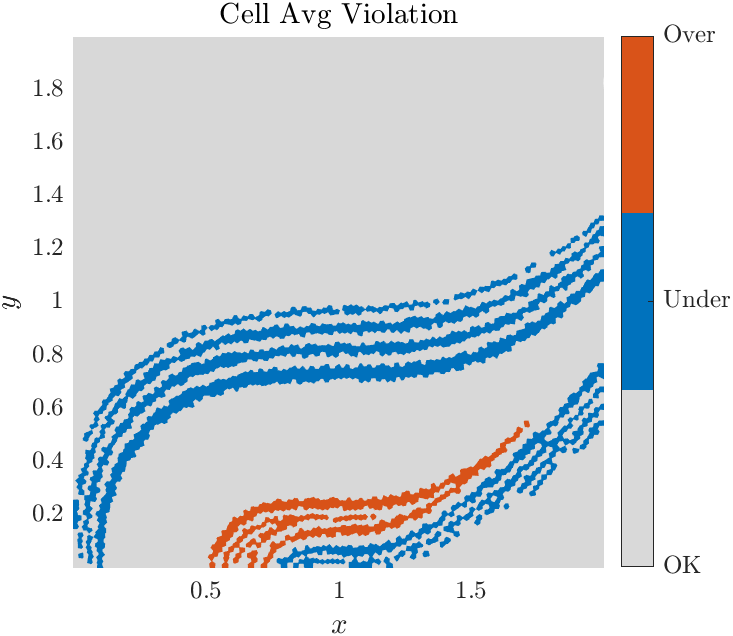}
    \subcaption{DG 10k elements (over/undershoot)}\label{figT_1:1c}
  \end{subfigure}
  \medskip % vertical gap between the two rows
  % ---- Row 2 ----
  \begin{subfigure}{0.32\textwidth}
    \includegraphics[width=\linewidth]{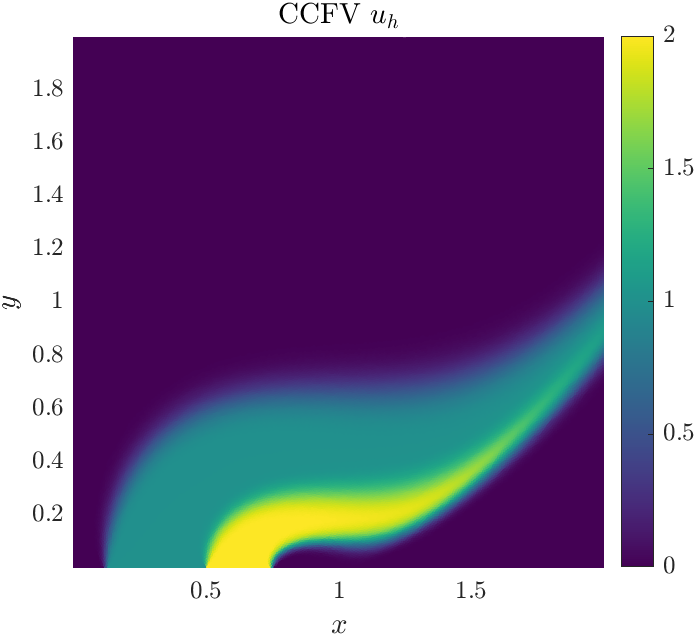}
    \subcaption{CCFV 65k elements}\label{figT_1:1d}
  \end{subfigure}\hfill
  \begin{subfigure}{0.32\textwidth}
    \includegraphics[width=\linewidth]{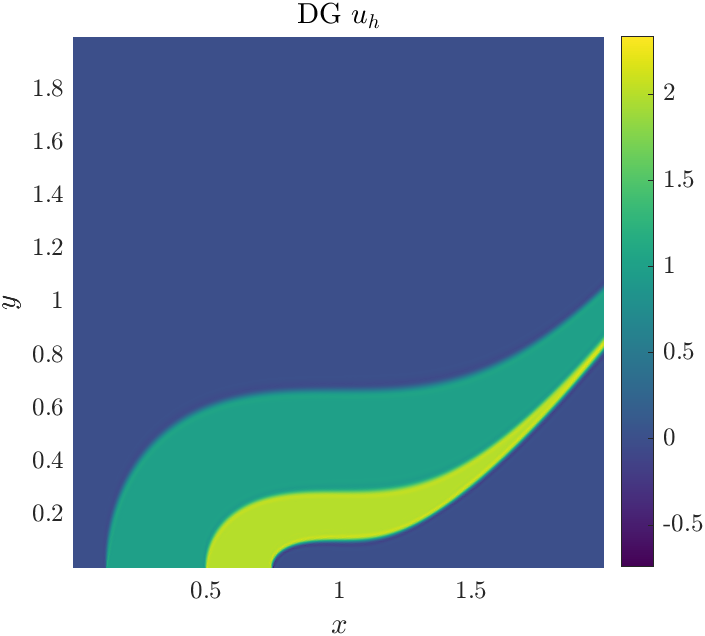}
    \subcaption{DG 65k elements}\label{figT_1:1e}
  \end{subfigure}\hfill
  \begin{subfigure}{0.34\textwidth}
    \includegraphics[width=\linewidth]{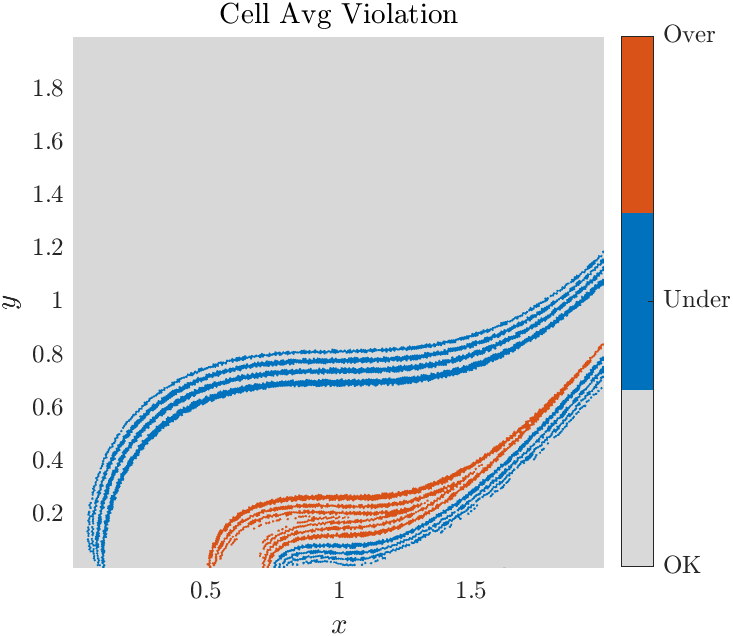}
    \subcaption{DG 65k elements (over/undershoot)}\label{figT_1:1f}
  \end{subfigure} 
  \caption{Baseline tests from Section~\ref{sec:Triple}, full CCFV ($\mathcal{T}_h^{DG}=\emptyset$) or full DG ($\mathcal{T}_h^{FV}=\emptyset$) with various mesh sizes.}
  \label{figT_1:sixgrid}
\end{figure}   
We next examine the FV-DG adaptive results in Fig.~\ref{figT_2:sixgrid}. The FV-DG solution is markedly better than CCFV and improves further with mesh refinement (Figs.~\ref{figT_2:2a}--\ref{figT_2:2c}). In addition, the FV-DG solution bound-preserving up to the specified tolerance.  Compared across refinements, the DG region becomes progressively larger (Figs.~\ref{figT_2:2d}--\ref{figT_2:2f}).
\begin{figure}[H]
  \centering

  % ---- Row 1 ----
  \begin{subfigure}[t]{0.32\textwidth}
    \includegraphics[width=\linewidth,height=0.24\textheight,keepaspectratio]{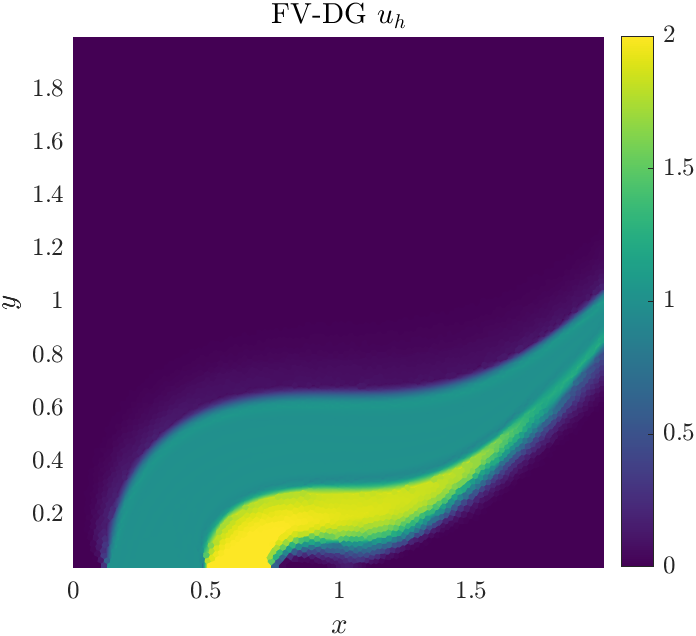}
    \subcaption{FV-DG 10k elements}\label{figT_2:2a}
  \end{subfigure}\hfill
  \begin{subfigure}[t]{0.32\textwidth}
    \includegraphics[width=\linewidth,height=0.24\textheight,keepaspectratio]{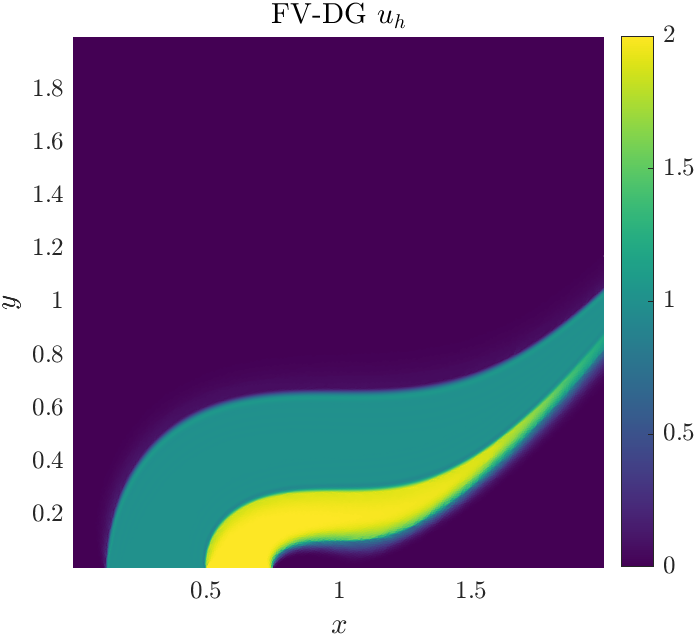}
    \subcaption{FV-DG 65k elements}\label{figT_2:2b}
  \end{subfigure}\hfill
  \begin{subfigure}[t]{0.32\textwidth}
    \includegraphics[width=\linewidth,height=0.24\textheight,keepaspectratio]{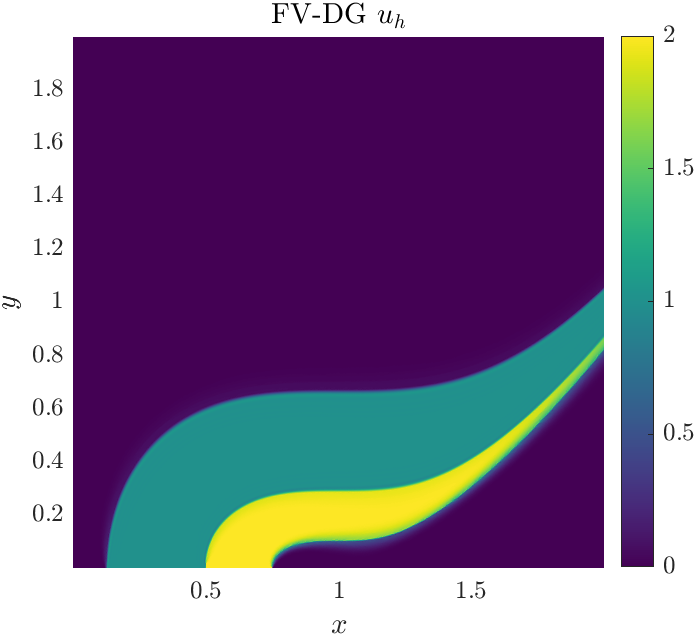}
    \subcaption{FV-DG 250k elements}\label{figT_2:2c}
  \end{subfigure}

  \medskip

  % ---- Row 2 ----%triple_FV_DG1_1_62500_1e_m7
  \begin{subfigure}[t]{0.32\textwidth}
    \includegraphics[width=\linewidth,height=0.24\textheight,keepaspectratio]{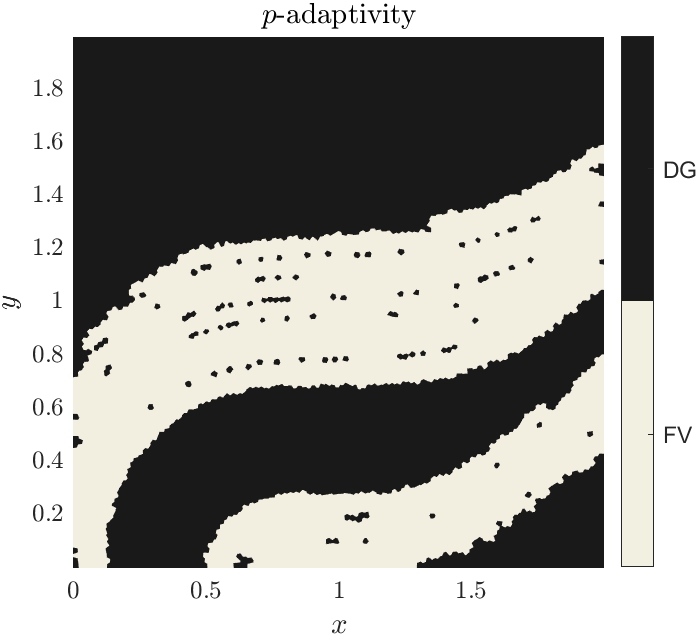}
    \subcaption{FV-DG 10k elements  (62\% DG, 38\% FV)}\label{figT_2:2d}
  \end{subfigure}\hfill
  \begin{subfigure}[t]{0.32\textwidth}
    \includegraphics[width=\linewidth,height=0.24\textheight,keepaspectratio]{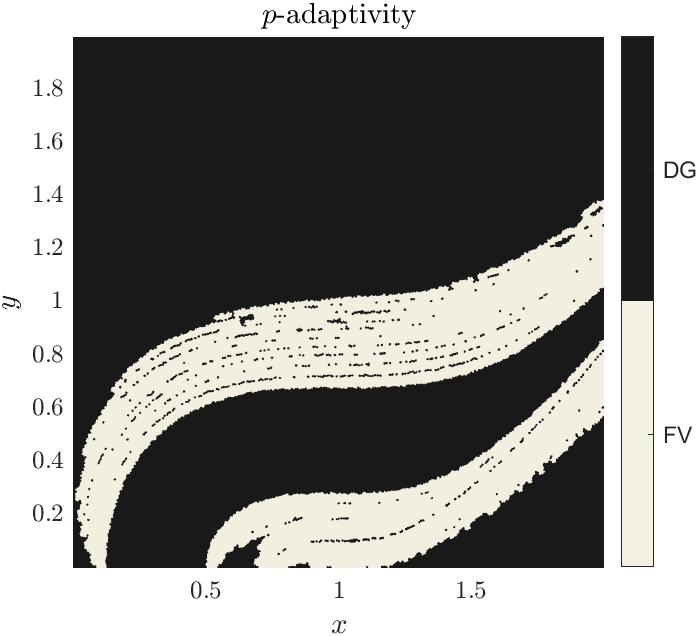}
    \subcaption{FV-DG 65k elements (75\% DG, 25\% FV)}\label{figT_2:2e}
  \end{subfigure}\hfill
  \begin{subfigure}[t]{0.32\textwidth}
    \includegraphics[width=\linewidth,height=0.24\textheight,keepaspectratio]{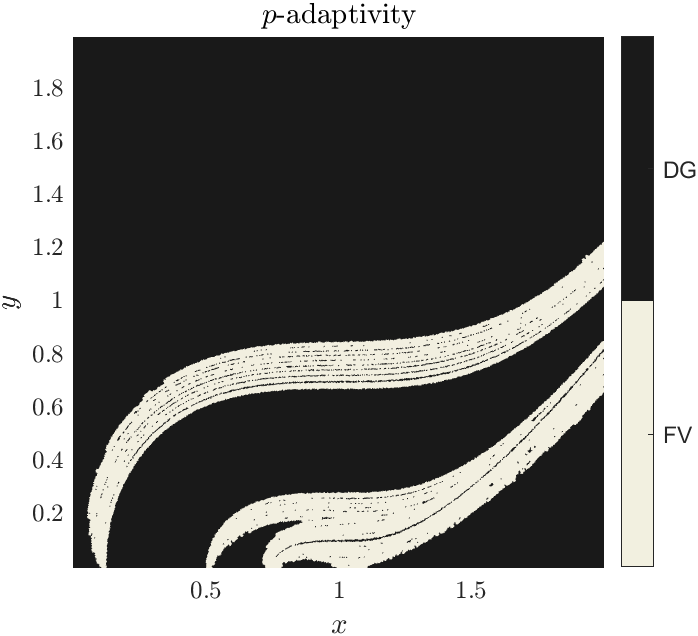}
    \subcaption{FV-DG 250k elements (85\% DG, 15\% FV)}\label{figT_2:2f}
  \end{subfigure}

  \caption{Section~\ref{sec:Triple} adaptive FV-DG with $\delta=10^{-7}$ and various mesh sizes.}
  \label{figT_2:sixgrid}
\end{figure}
\subsection{Sub-extremal oscillations} \label{secT:oscillations}
	In practice, the adaptive procedure functioned robustly across all test cases.  We note that the adaptive technique specifically enforces bound-preservation, so it is possible the the approximations have spurious oscillations within the bounds.  For this benchmark problem, the exact solution to~\eqref{model_problem1} obeys $0\le u\le 2$. When the unmodified approximation (Figs.~\ref{figT_1:1b} and~\ref{figT_1:1e}) yields \(\overline{u}_h \approx 1\), the indicator based on cell-average bounds will not flag that region (no overshoot/undershoot). To better explain this, Fig.~\ref{figT_profile:two-by-three} shows solution profiles along $x=1$. The FV-DG scheme is bound-preserving and resolves the profile more sharply than CCFV, though small local oscillations remain (e.g., near $x\approx0.3$).  
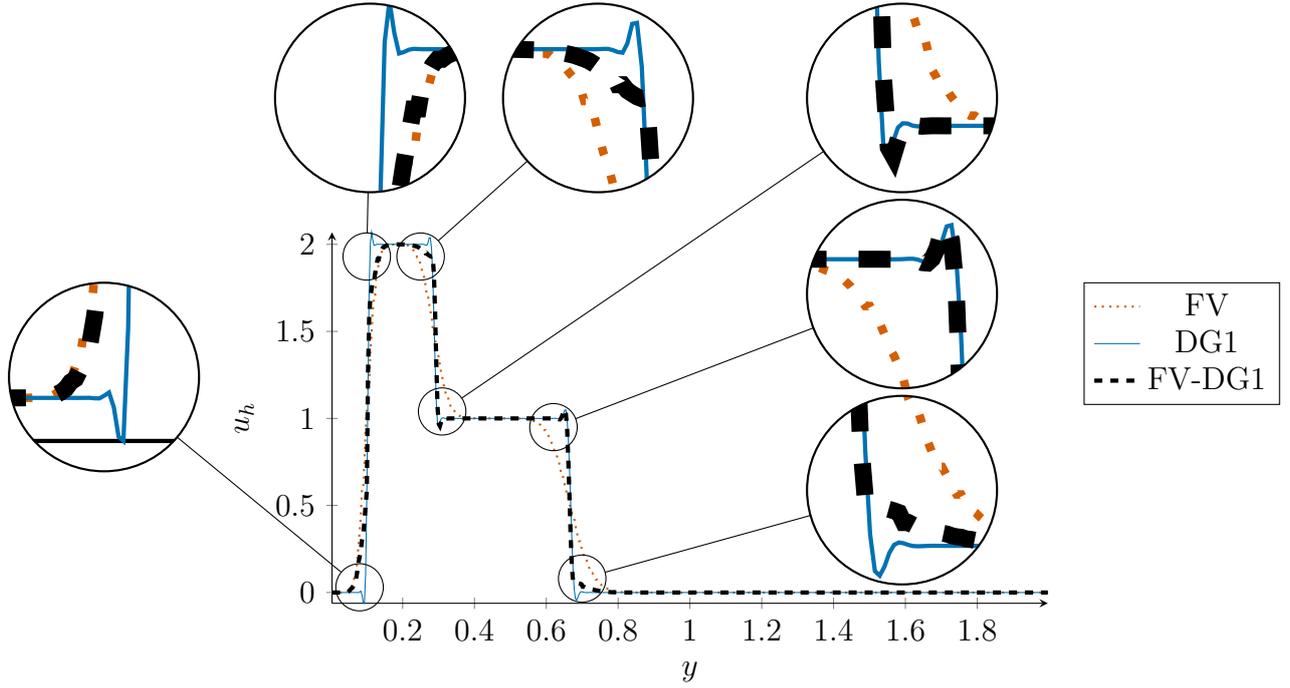
\begin{figure}[H] %triple_FV_262k_profile.dat
  \centering % Row 1
\begin{tikzpicture}[spy using outlines={
circle,
size=2.5cm,
magnification=4,
connect spies,
every spy on node/.style={fill=none, draw}
}]
\begin{axis}[
name=main,
legend style={at={(1.05,0.7)}, anchor=west},
width=11cm, height=6.5cm,
axis lines=left,
xlabel={$y$}, ylabel={$u_h$},
clip=false,
]
    \addplot+[thick, mark=none,dotted, OIverm]
      table[
        col sep=space,
        header=false,
        x index=0,  % first column = P(:,2) (your y)
        y index=1   % second column = uline
      ] {triple_FV_262k_profile.dat};
      
    \addplot+[thin, mark=none, OIblue]
      table[
        col sep=space,
        header=false,
        x index=0,  % first column = P(:,2) (your y)
        y index=1   % second column = uline
      ] {triple_DG_262k_profile.dat};     
       
    \addplot+[ultra thick, mark=none, dashed,OIblack]%+[only marks, mark=o, mark size=2pt, mark options={fill=none}, each nth point={10}]%+[thick, mark=none]
      table[
        col sep=space,
        header=false,
        x index=0,  % first column = P(:,2) (your y)
        y index=1   % second column = uline
      ] {triple_FV_DG_262k_profile.dat}; 

% define a coordinate in axis coordinates; it becomes a normal TikZ coordinate
\coordinate (spypointA) at (axis cs:0.70,0.08);
\coordinate (spypointB) at (axis cs:0.62,0.95);
\coordinate (spypointC) at (axis cs:0.31,1.04);
\coordinate (spypointD) at (axis cs:0.25,1.93);
\coordinate (spypointE) at (axis cs:0.10,1.93);
\coordinate (spypointF) at (axis cs:0.08,0.03);
   \addlegendentry{FV}
    \addlegendentry{DG1}
    \addlegendentry{FV-DG1}
\end{axis}

% place lens relative to the axis box (outside axis)
\spy on (spypointA) in node at (7.5,1.5);%([xshift=8mm]main.east);
\spy on (spypointB) in node at (7.5,4.1);
\spy on (spypointC) in node at (7.5,6.7);
\spy on (spypointD) in node at (3.5,6.7);
\spy on (spypointE) in node at (0.5,6.7);
\spy on (spypointF) in node at (-3.0,3.0);
\end{tikzpicture}
\caption{Profile along $x = 1$ for the triple layer problem, with magnifications. Fixed mesh with 262k elements.}
  \label{figT_profile:two-by-three}
\end{figure}  

Figure~\ref{figT:higher-order} compares the adaptive FV-DG scheme for varying polynomial degree $k$. The resulting FV and DG partitions are largely consistent across $k$, with only small differences.   
\begin{figure}[ht]
  \centering
  % Row 1
  \begin{subfigure}{0.23\textwidth}\centering
    \includegraphics[width=\linewidth]{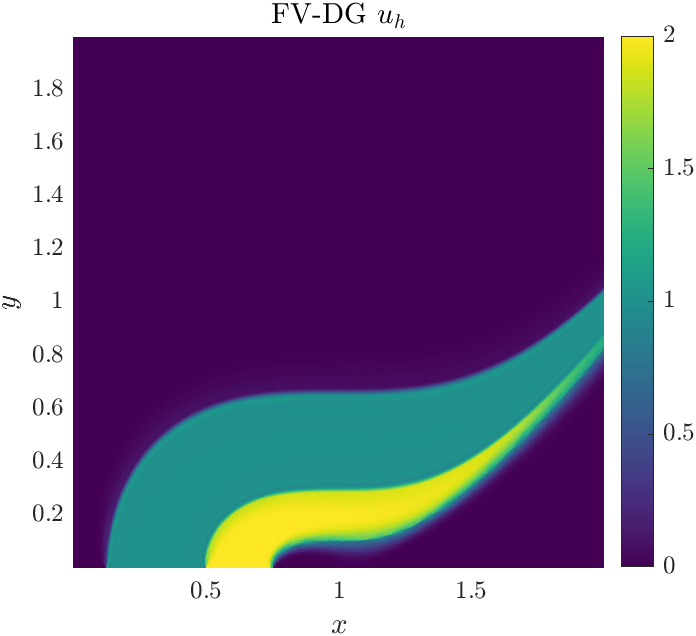}
    \caption{FV-DG1}
    %\label{fig:a}
  \end{subfigure}\hfill
  \begin{subfigure}{0.23\textwidth}\centering
    \includegraphics[width=\linewidth]{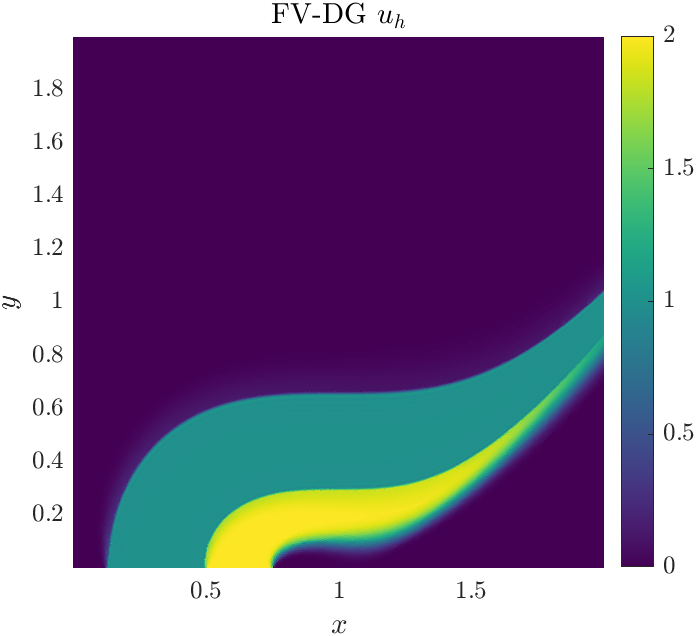}
    \caption{FV-DG2}
    %\label{fig:b}
  \end{subfigure}\hfill
  \begin{subfigure}{0.23\textwidth}\centering
    \includegraphics[width=\linewidth]{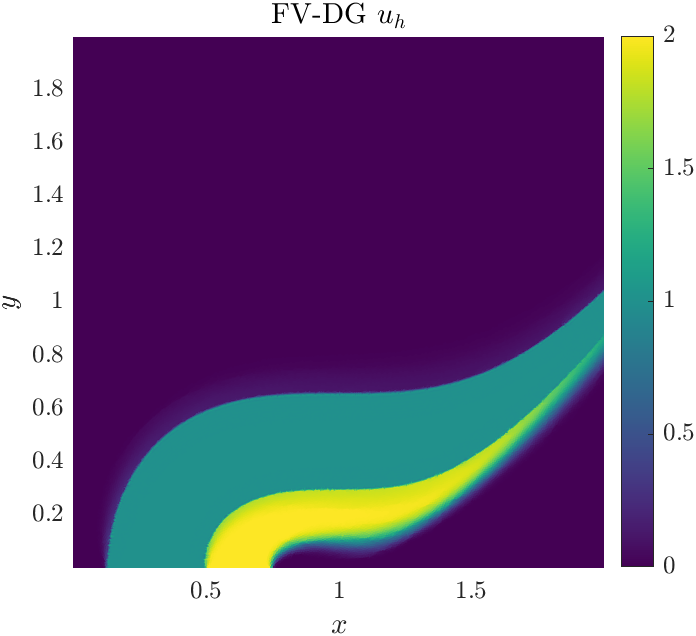}
    \caption{FV-DG3}
    %\label{fig:c}
  \end{subfigure}\hfill
  \begin{subfigure}{0.23\textwidth}\centering
    \includegraphics[width=\linewidth]{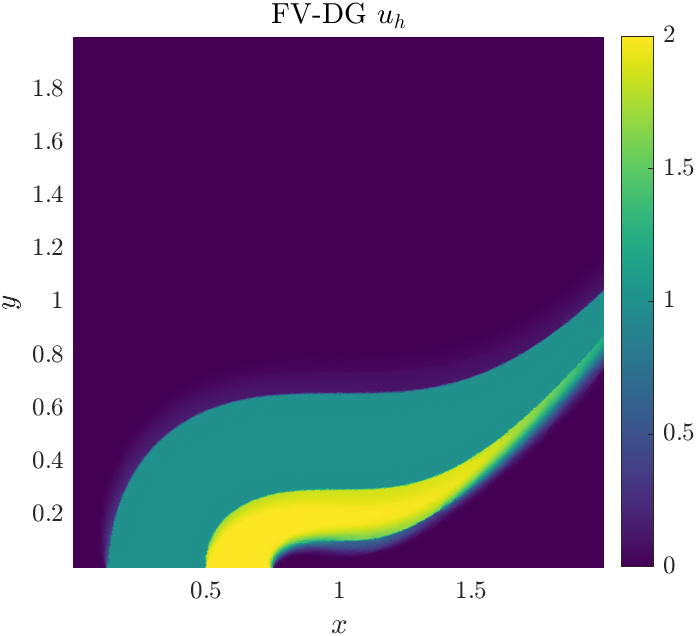}
    \caption{FV-DG4}
   % \label{fig:d}
  \end{subfigure}

  \medskip
  % Row 2
  \begin{subfigure}{0.23\textwidth}\centering
    \includegraphics[width=\linewidth]{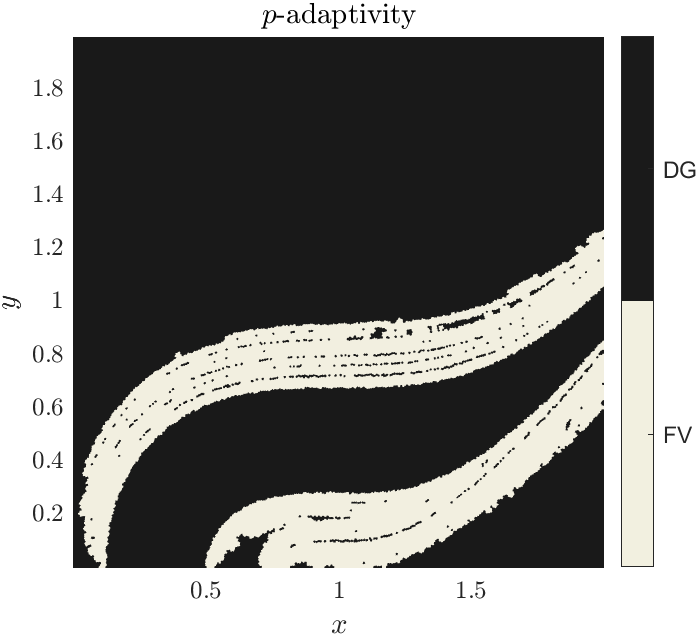}
    \caption{FV-DG1}
    %\label{fig:e}
  \end{subfigure}\hfill
  \begin{subfigure}{0.23\textwidth}\centering
    \includegraphics[width=\linewidth]{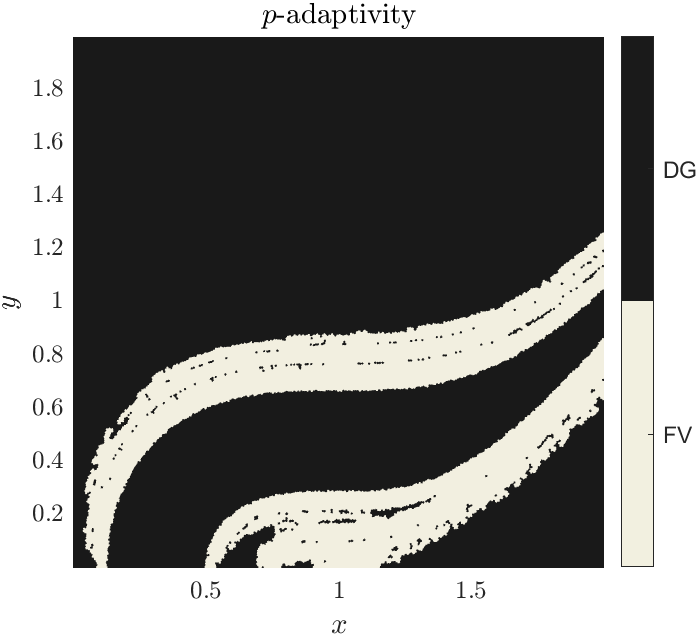}
    \caption{FV-DG2}
   % \label{fig:f}
  \end{subfigure}\hfill
  \begin{subfigure}{0.23\textwidth}\centering
    \includegraphics[width=\linewidth]{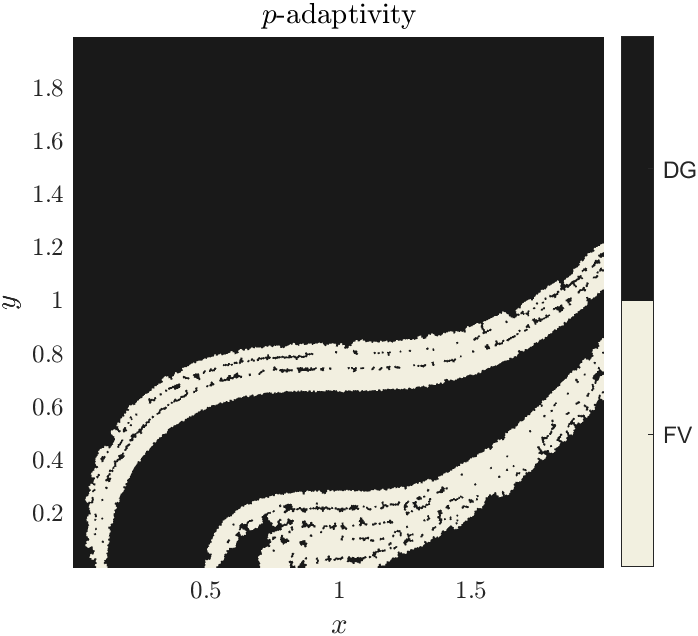}
    \caption{FV-DG3}
    %\label{fig:g}
  \end{subfigure}\hfill
  \begin{subfigure}{0.23\textwidth}\centering
    \includegraphics[width=\linewidth]{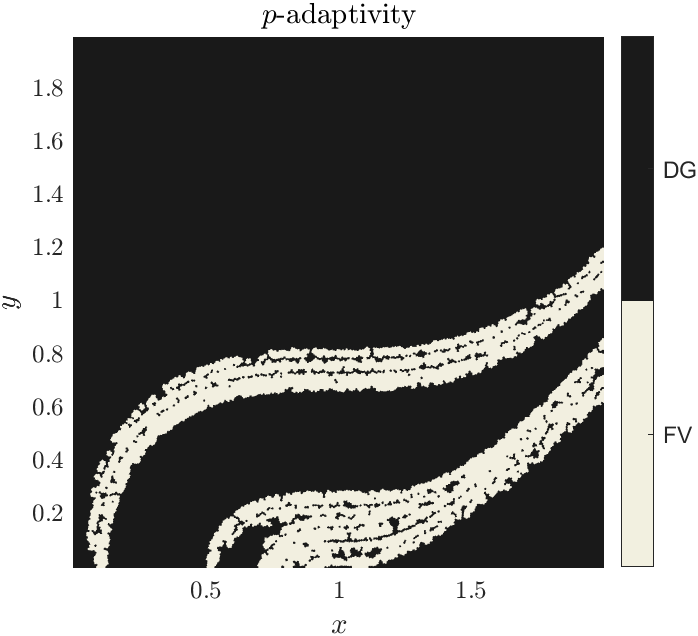}
    \caption{FV-DG4}
   % \label{fig:h}
  \end{subfigure} 
  \caption{Comparison of the adaptive FV-DG scheme for different polynomial degrees.}
  \label{figT:higher-order}
\end{figure}

\clearpage
\section{Convection-dominated problem (L-shaped domain)} \label{num_L} %We test two different cases, $K=10^{-3}$ and $K=10^{-6}$.   
 	The next benchmark was taken from~\cite{FORMAGGIA2004511}.  No exact solution can be easily written down for this problem.  The domain is L-shaped with $\Omega = [0,4]^2 \backslash [0,2]^2.$  The convective vector field is given as $\vec{\beta}=[y,-x]^T$ and $K=10^{-6}$.  The PDE boundary conditions are as follows:
\begin{align} 
u &=1 ,&&\text{on } \{(x,y): x=0,~y\in(2,4)\}, \notag
\\
K \nabla u \cdot \textbf{n} &=0 ,&&\text{on }  \{(x,y): x=4,~y\in(0,4)\} \notag
\cup
  \{(x,y): x\in (2,4),~y=0\} \notag
,
\\
u &=0 ,&&\text{elsewhere}.  \notag
\end{align} 
The solution of this problem exhibits two boundary layers along  $\{(x,y): x\in(0,4),~y=4\}$ and $\{(x,y): x\in(0,2),~y=2\}$ . Moreover, the solution obeys $0\le u \le 1$ and contains two circular-shaped interior layers \cite{Carpio}.
 
	A series of tests are conducted.  We have some baseline experiments, in particular, high-order DG on all mesh elements, and CCFV on all mesh elements. Fig.~\ref{figL_1:sixgrid} contrasts the CCFV and DG schemes. As seen in panels~\ref{figL_1:1a} and~\ref{figL_1:1d}, the CCFV solution is smeared, indicating stronger numerical diffusion. Conversely, DG is less diffusive and captures the sharp features (Figs.~\ref{figL_1:1b}, \ref{figL_1:1e}). However, the DG cell averages exhibit overshoots/undershoots, as evidenced in Figs.~\ref{figL_1:1c} and \ref{figL_1:1f}. Further 
$h$-refinement yields only marginal improvement.
\begin{figure}[H] % use [htbp] if you want more placement flexibility
  \centering 
  % ---- Row 1 ----
  \begin{subfigure}{0.319\textwidth}
    \includegraphics[width=\linewidth]{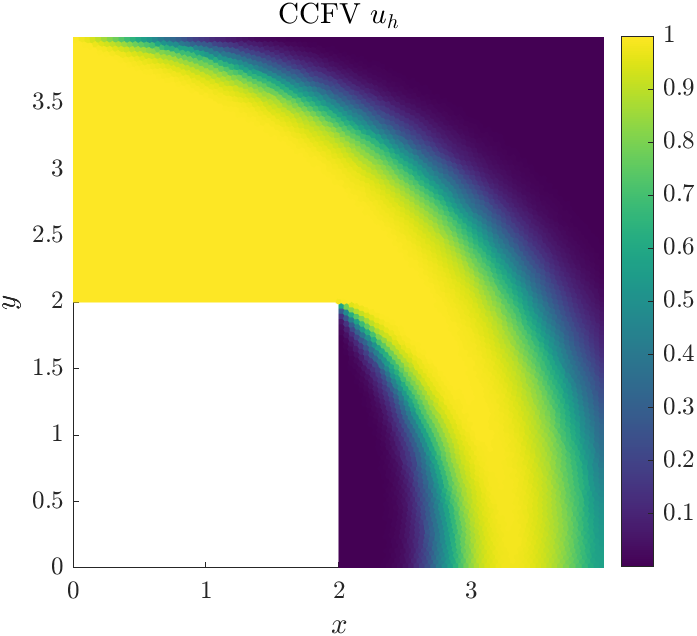}
    \subcaption{CCFV 8k elements}\label{figL_1:1a}
  \end{subfigure}\hfill
  \begin{subfigure}{0.33\textwidth}
    \includegraphics[width=\linewidth]{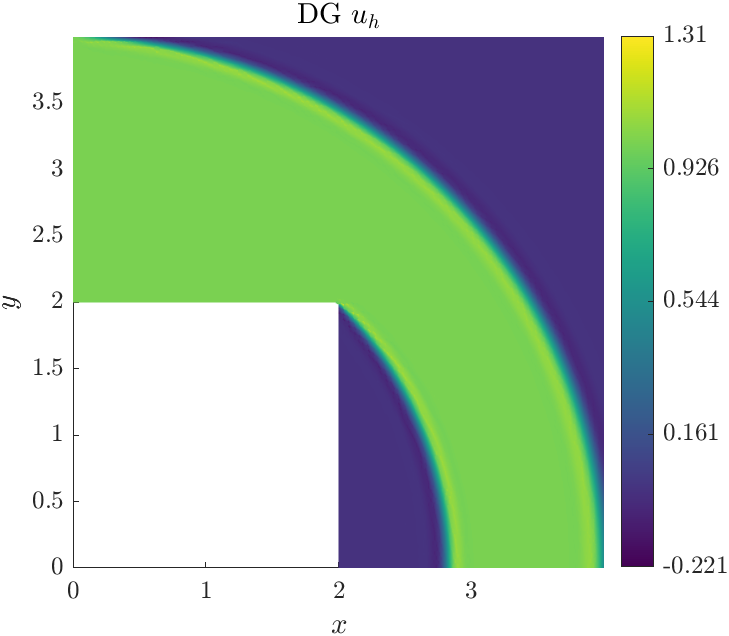}
    \subcaption{DG 8k elements}\label{figL_1:1b}
  \end{subfigure}\hfill
  \begin{subfigure}{0.33\textwidth}
    \includegraphics[width=\linewidth]{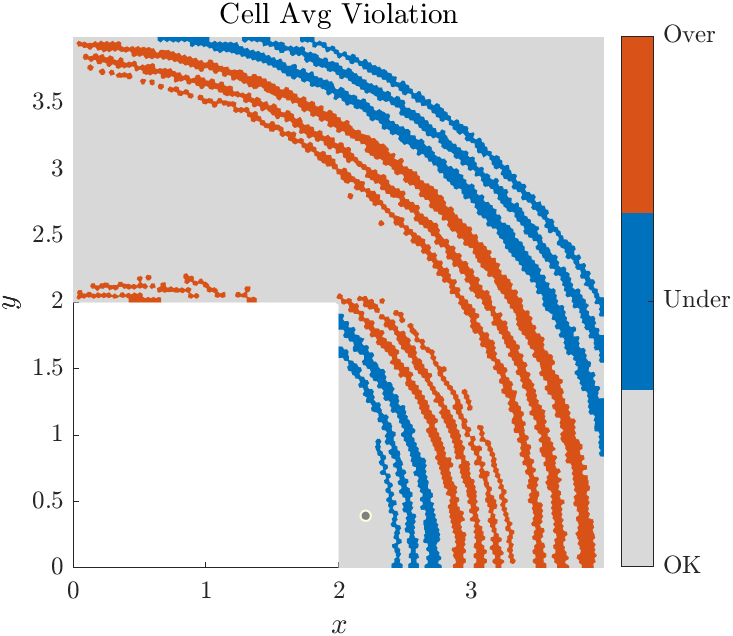}
    \subcaption{DG 8k elements (over/undershoot)}\label{figL_1:1c}
  \end{subfigure}
  \medskip % vertical gap between the two rows
  % ---- Row 2 ----
  \begin{subfigure}{0.319\textwidth}
    \includegraphics[width=\linewidth]{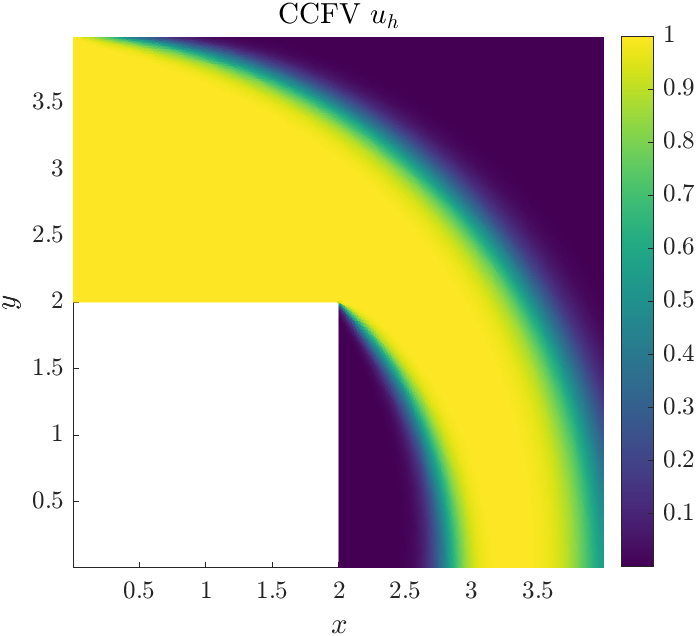}
    \subcaption{CCFV 32k elements}\label{figL_1:1d}
  \end{subfigure}\hfill
  \begin{subfigure}{0.33\textwidth}
    \includegraphics[width=\linewidth]{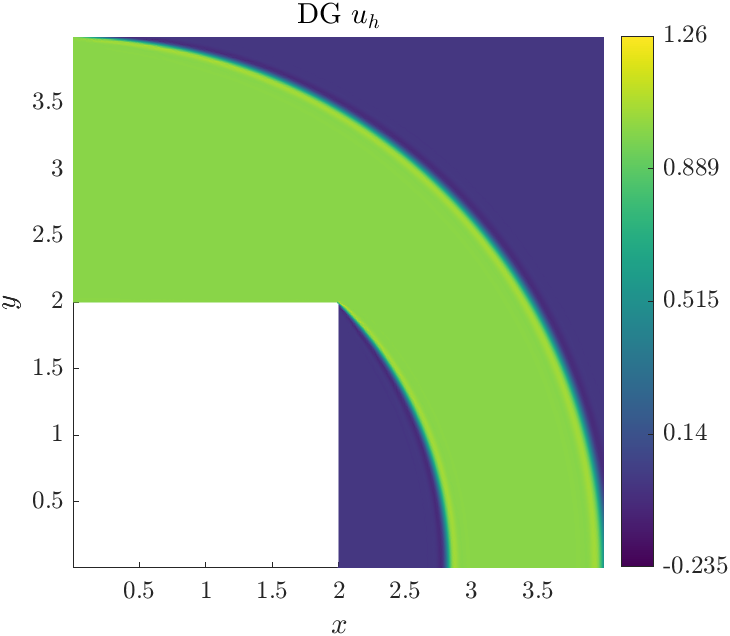}
    \subcaption{DG 32k elements}\label{figL_1:1e}
  \end{subfigure}\hfill
  \begin{subfigure}{0.33\textwidth}
    \includegraphics[width=\linewidth]{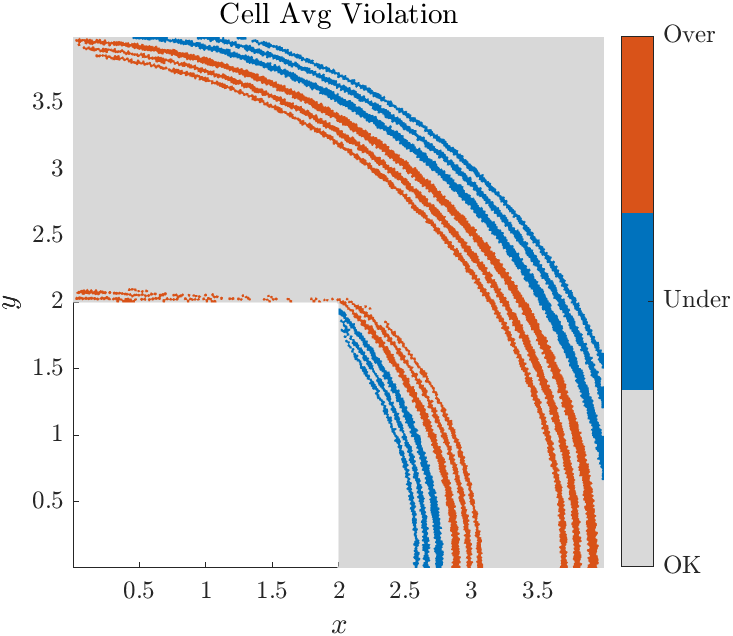}
    \subcaption{DG 32k elements (over/undershoot)}\label{figL_1:1f}
  \end{subfigure} 
  \caption{Baseline tests - use CCFV everywhere or DG everywhere with various mesh sizes.}
  \label{figL_1:sixgrid}
\end{figure}
%Next, we compare the FV-DG coupled scheme, with adaptivity. From Figs.~\ref{figL_2:2a},~\ref{figL_2:2b}, and~\ref{figL_2:2c}, we can see that the FV-DG solution is considerably less diffusive than that of the CCFV solution.  The second row of Fig.~\ref{figL_2:sixgrid} displays the FV and DG regions.  Interestingly enough, for this benchmark, as the mesh is refined, the DG region occupies more of the computational domain.
	\subsection{FV–DG adapted scheme with relaxed tolerance}
	Next, we compare the adaptive FV–DG coupled scheme with $\delta=10^{-6}$. Figs.~\ref{figL_2:2a}, \ref{figL_2:2b}, and \ref{figL_2:2c} show that the FV–DG solution is considerably less diffusive than the CCFV solution. The second row of Fig.~\ref{figL_2:sixgrid} delineates the FV and DG regions. For this benchmark, the DG region expands with mesh refinement, occupying an increasing fraction of the domain. 
\begin{figure}[H]
  \centering
  % ---- Row 1 ----
  \begin{subfigure}[t]{0.32\textwidth}
    \includegraphics[width=\linewidth,height=0.24\textheight,keepaspectratio]{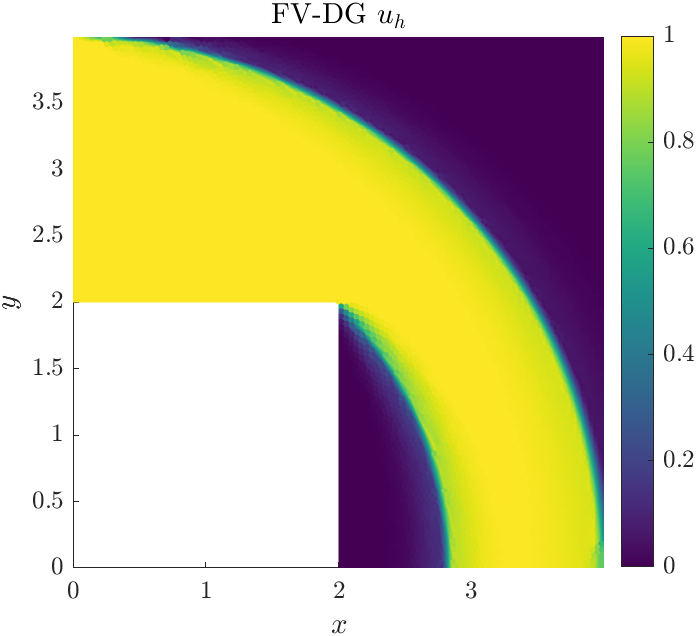}
    \subcaption{FV-DG 8k elements}\label{figL_2:2a}
  \end{subfigure}\hfill
  \begin{subfigure}[t]{0.32\textwidth}
    \includegraphics[width=\linewidth,height=0.24\textheight,keepaspectratio]{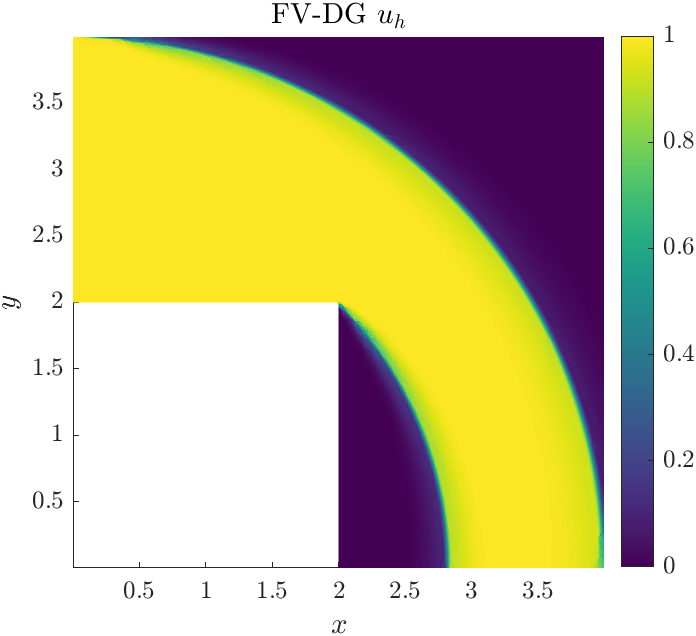}
    \subcaption{FV-DG 32k elements}\label{figL_2:2b}
  \end{subfigure}\hfill
  \begin{subfigure}[t]{0.32\textwidth}
    \includegraphics[width=\linewidth,height=0.24\textheight,keepaspectratio]{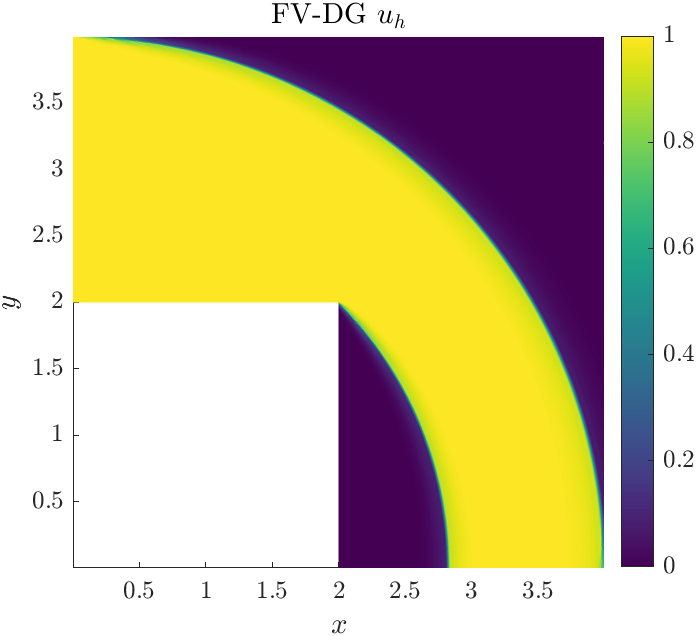}
    \subcaption{FV-DG 132k elements}\label{figL_2:2c}
  \end{subfigure}

  \medskip

  % ---- Row 2 ----
  \begin{subfigure}[t]{0.32\textwidth}
    \includegraphics[width=\linewidth,height=0.24\textheight,keepaspectratio]{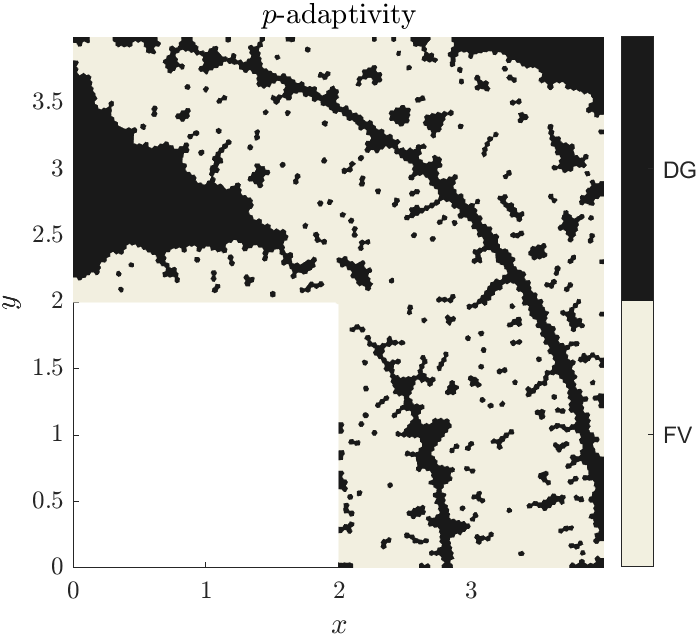}
    \subcaption{FV-DG 8k elements  (38\% DG, 62\% FV)}\label{figL_2:2d}
  \end{subfigure}\hfill
  \begin{subfigure}[t]{0.32\textwidth}
    \includegraphics[width=\linewidth,height=0.24\textheight,keepaspectratio]{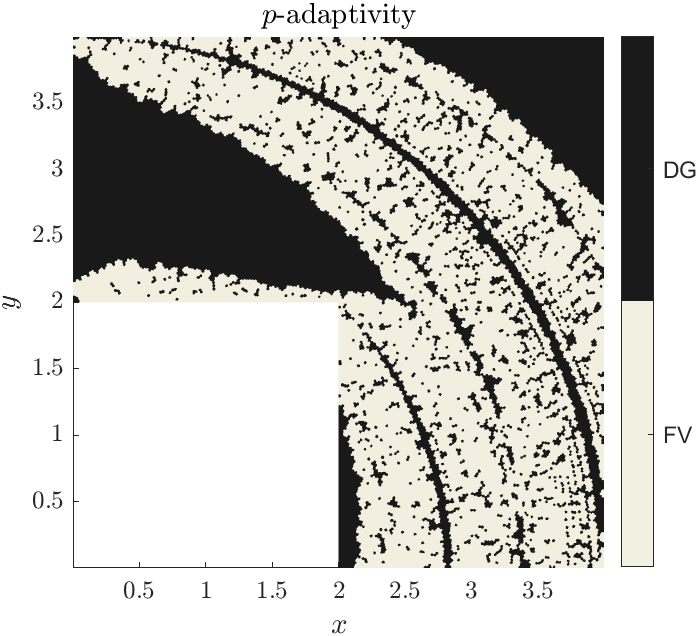}
    \subcaption{FV-DG 32k elements (42\% DG, 58\% FV)}\label{figL_2:2e}
  \end{subfigure}\hfill
  \begin{subfigure}[t]{0.32\textwidth}
    \includegraphics[width=\linewidth,height=0.24\textheight,keepaspectratio]{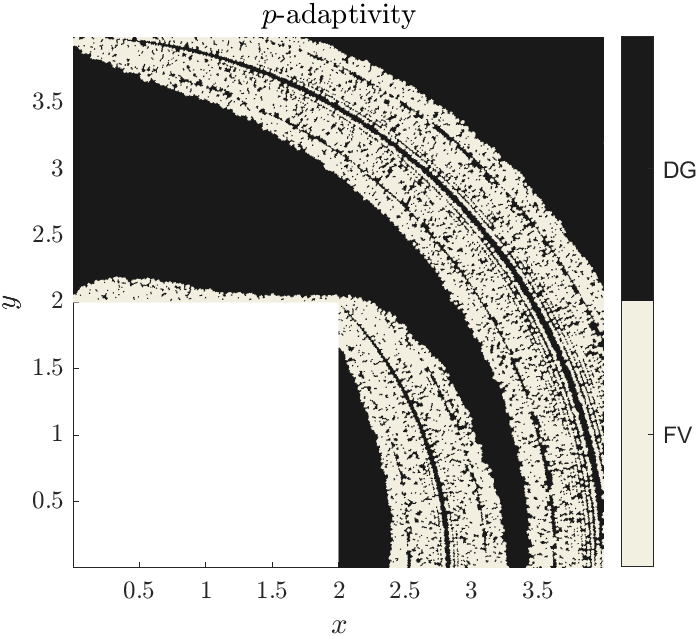}
    \subcaption{FV-DG 132k elements (61\% DG, 39\% FV)}\label{figL_2:2f}
  \end{subfigure}

  \caption{Adaptive scheme for FV--DG with $\delta=10^{-6}$ and various mesh sizes.}
  \label{figL_2:sixgrid}
\end{figure}
 \subsection{FV–DG adapted scheme with stricter tolerances}
	With $\delta=10^{-9}$ and $\delta=10^{-13}$, comparable results are obtained by the FV-DG coupled scheme (see Fig.~\ref{figL_3:twogrid}).  At identical mesh resolution, the technique provides a much less diffusive solution than CCFV (see the third row of Fig.~\ref{figL_3:twogrid}).  Specifically, Figs.~\ref{figL_3:1f} (132k elements) and~\ref{figL_3:1e} (532k elements) indicate that CCFV remains markedly more diffusive than the adaptive FV–DG method.
	
The adaptive strategy delivers good results for this challenging benchmark. The DG cell count is controlled by \(\delta\); typically, decreasing \(\delta\) reduces the DG region. For instance, Fig.~\ref{figL_3:1b} with $\delta=10^{-9}$ has a space composition of (61\% DG, 39\% FV), whereas Fig.~\ref{figL_3:1d} with $\delta=10^{-13}$ has a space composition of (53\% DG, 47\% FV). A stricter tolerance for $\delta $ increases the FV region.
\begin{figure}[H]
  \centering
  \begin{subfigure}{0.40\textwidth}
    \includegraphics[width=\linewidth]{L_FV_DG_100k_1.png}
    \subcaption{$\delta = 10^{-6}$, 132k  elements (61\% DG, 39\% FV)}
    \label{figL_3:1a}
  \end{subfigure}~%\hfill
  \begin{subfigure}{0.40\textwidth}
    \includegraphics[width=\linewidth]{L_FV_DG_100k_2.png}
    \subcaption{$\delta = 10^{-6}$, 132k elements (61\% DG, 39\% FV)}
    \label{figL_3:1b}
  \end{subfigure}    
 \medskip  
  \begin{subfigure}{0.40\textwidth}
    \includegraphics[width=\linewidth]{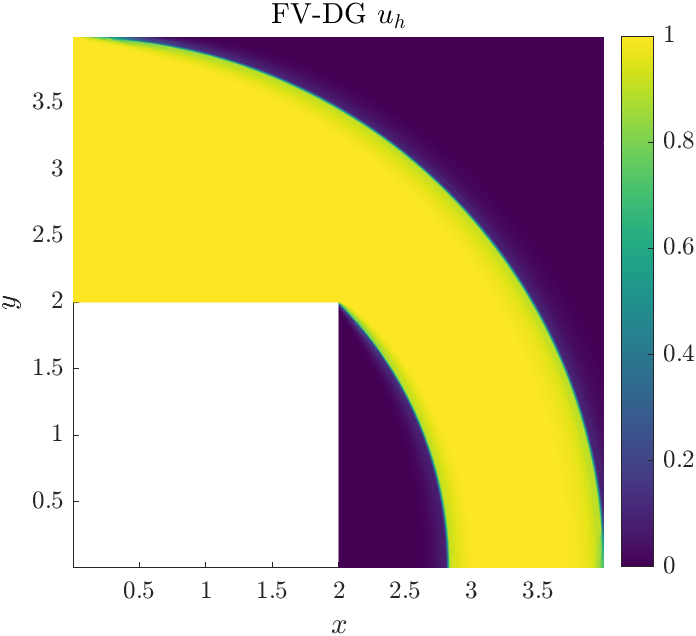}
    \subcaption{$\delta = 10^{-13}$, 132k elements (53\% DG, 47\% FV)}
    \label{figL_3:1c}
  \end{subfigure}~%\hfill
  \begin{subfigure}{0.40\textwidth}
    \includegraphics[width=\linewidth]{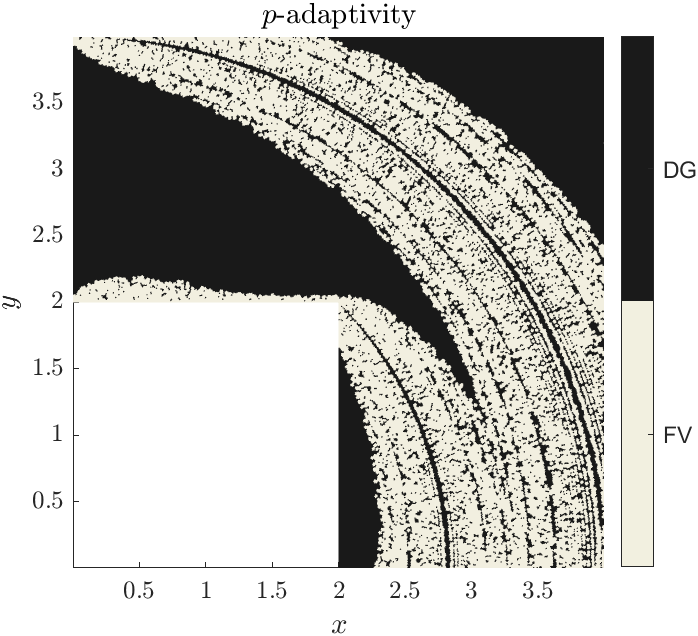}
    \subcaption{$\delta = 10^{-13}$, 132k elements (53\% DG, 47\% FV)}
    \label{figL_3:1d}
  \end{subfigure}    
\medskip
  \begin{subfigure}{0.40\textwidth}
    \includegraphics[width=\linewidth]{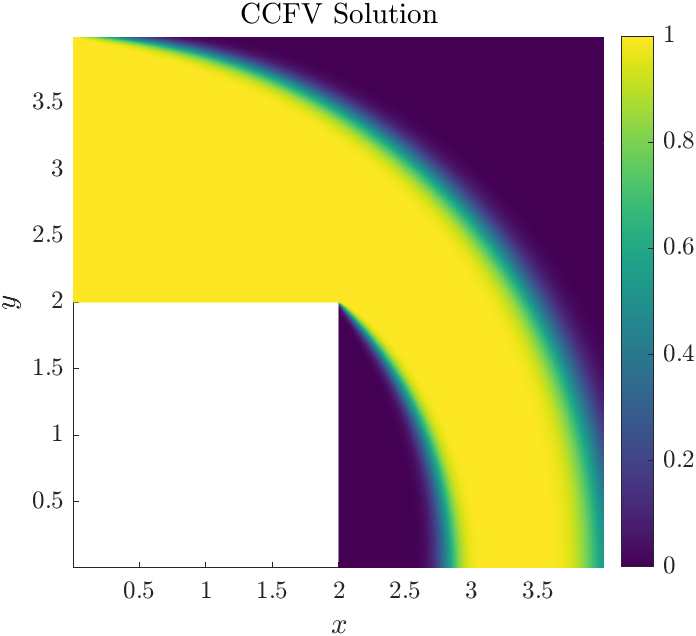}
    \subcaption{CCFV 132k elements}
    \label{figL_3:1e}
  \end{subfigure}~%\hfill
  \begin{subfigure}{0.40\textwidth}
    \includegraphics[width=\linewidth]{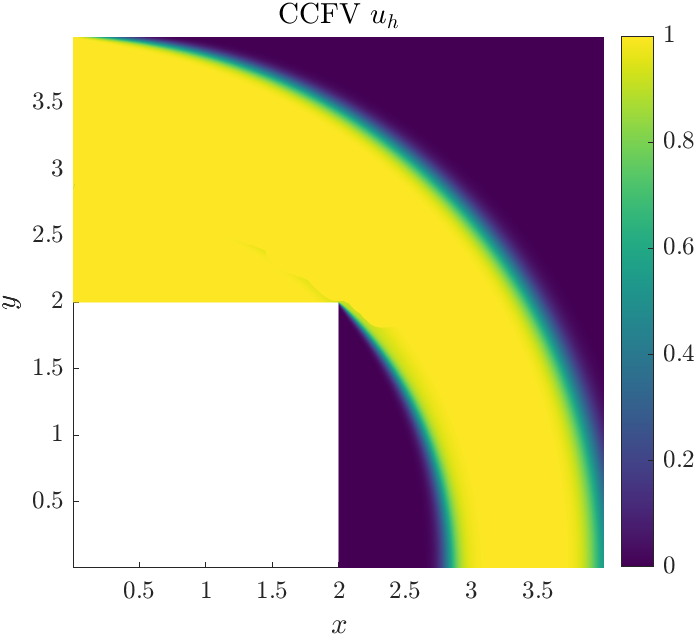}
    \subcaption{CCFV 532k elements}
    \label{figL_3:1f}
  \end{subfigure}   
  \caption{CCFV vs adaptive scheme for FV-DG with $\delta = 10^{-9}$ (row 1) and $\delta = 10^{-13}$ (row 2).   }
  \label{figL_3:twogrid}
\end{figure}
%	Next we illustrate that the adaptive technique described in Section~\ref{sec:adapt} can incorporate standard $h$-adaptivity. Here, we take $\delta=10^{-13}$. Fig.~\ref{fig4_3:adapt} (a) visualizes the initial mesh. Fig.~\ref{fig4_3:adapt} (b) visualizes the mesh after $h$-adapitivity.  Fig.~\ref{fig4_3:adapt} (c) Zooms in near $x=2$ on the refined mesh.  Fig.~\ref{fig4_3:adapt} (d) plots the FV-DG solution on the $h$-adapted mesh. Fig.~\ref{fig4_3:adapt} (e) plots the FV and DG regions for the $h$-adapted mesh.  Fig.~\ref{fig4_3:adapt} (f) shows where the full DG method has violations in the cell average. 
 \subsection{FV–DG adapted scheme with $h$-adaptivity}
To demonstrate that the adaptive technique of Section~\ref{sec:adapt} accommodates standard $h$-adaptivity, we set $\delta=10^{-13}$.  Due to the nature of the selection of DG and FV regions, we assume that the entire mesh consists of Voronoi cells. This ensures that the TPFA mesh assumption holds (see Assumption~\ref{ass:tpfa-admissible}). After marking elements for refinement, we randomly place Voronoi sites in a small neighborhood of the marked elements’ centroids. We then pass the new sites, together with the existing ones, to a Voronoi mesh generator~\cite{fabri2009cgal,talischi2012polymesher,engwirda2018generalised}.
 
Given $u_h$ on an initial mesh $\mathcal{T}_h$, we drive mesh refinement with a jump-based indicator of the solution across element faces \cite{Dolejsi2003}:
\[
\eta_E^2 \;:=\; \sum_{\gamma\subset\partial E}  \int_\gamma   \frac{[u_h]_\gamma^2}{h_\gamma}\,\mathrm{d}s,
\quad
\eta_h(\mathcal{T}_h) = \bigg(\sum_E \eta_E^2\bigg)^{1/2}
.
\]
 Fig.~\ref{fig4_3:adapt} shows: (a) the initial mesh; (b) the mesh after $h$-refinement; (c) a zoom near $x=2$; (d) the FV–DG solution on the adapted mesh; (e) the FV and DG subregions; and (f) the cells where the full DG method violates the cell-average constraint.  Thus, the adaptive technique in Section~\ref{sec:adapt} fits naturally within standard $hp$-adaptive finite element strategies.
\begin{figure}[H]  % needs \usepackage{float}
  \centering

  \subfloat[Initial mesh]{\includegraphics[width=0.32\textwidth]{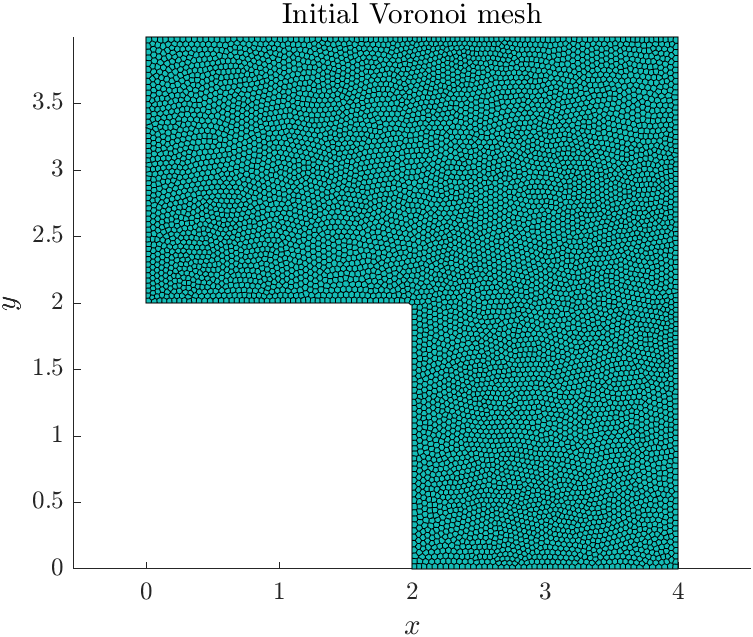}}\hfill
  \subfloat[$h$-adapted mesh]{\includegraphics[width=0.32\textwidth]{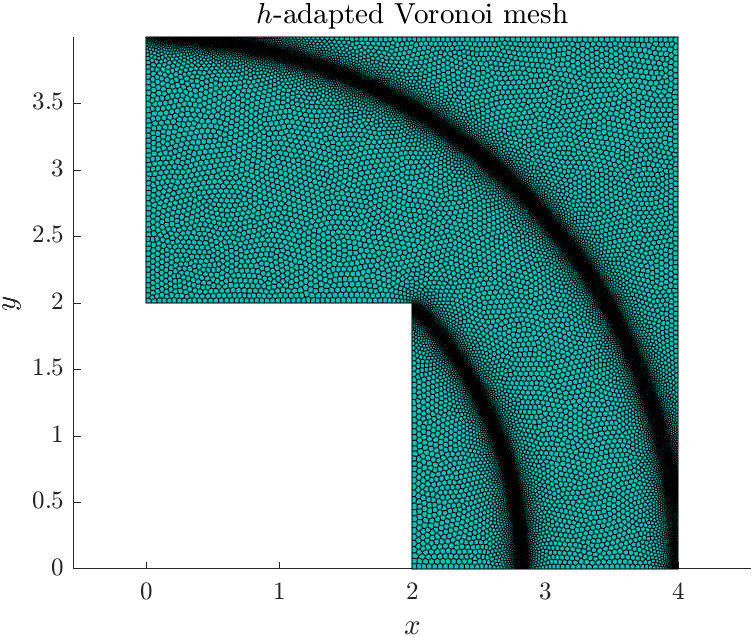}}\hfill
  \subfloat[Zoom-in of $h$-adapted mesh]{\includegraphics[width=0.32\textwidth]{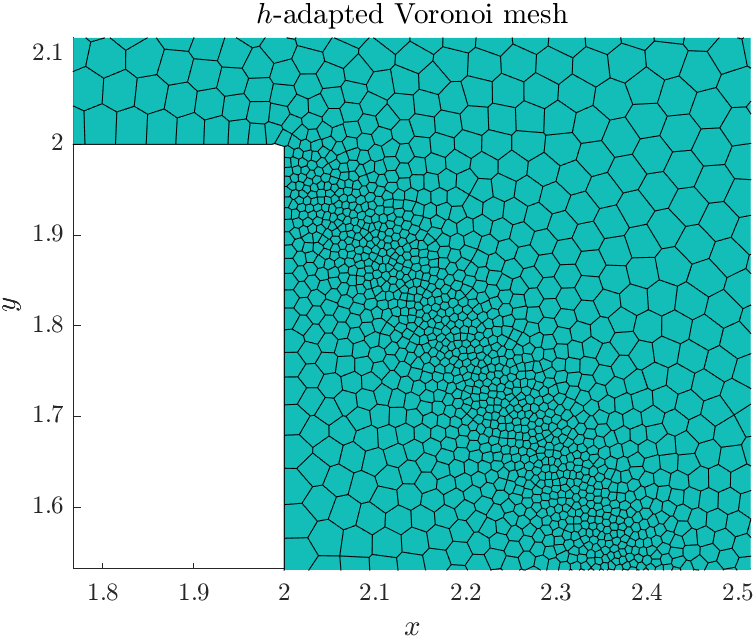}}\\[0.7em]

  \subfloat[Adapted FV-DG]{\includegraphics[width=0.32\textwidth]{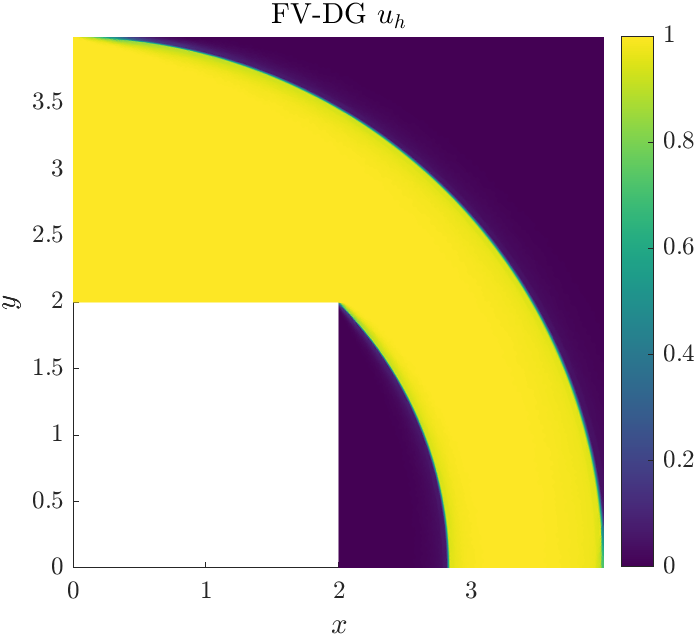}}\hfill
  \subfloat[FV/DG regions]{\includegraphics[width=0.32\textwidth]{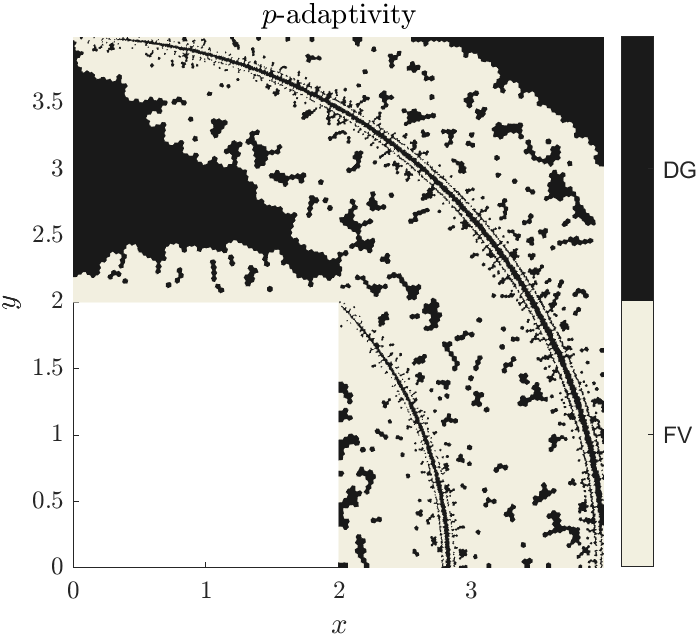}}\hfill
  \subfloat[Full DG violations]{\includegraphics[width=0.32\textwidth]{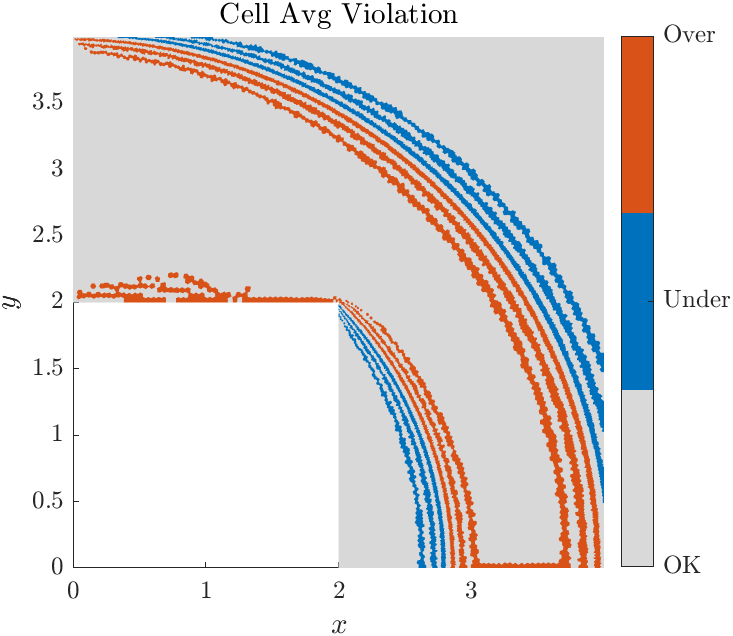}}

  \caption{Results for an $h$-adapted mesh.}
  \label{fig4_3:adapt}
\end{figure}

\clearpage
\section{Internal and boundary layers}\label{sec:badia}
	We adopt the test case from~\cite{BADIA2015107}. Because the problem is convection-dominated, the solution should carry the profile imposed at \(x=0\) downstream in the direction of \(\vec{\beta}\). The diffusion coefficient is set as  $K=10^{-4}$, and $\vec{\beta}=[\cos(\pi/3),-\sin(\pi/3)]^T$.     
 \begin{align}
%-K\Delta u + \nabla \cdot (\vec{\beta}u) & = 0,&&\text{in } \Omega, \label{problem_cd_3}
%\\
u &=0 ,&&\text{on } \{(x,y): y=0,~x\in(0,1)\} \notag,
\\
u &=1 ,&&\text{on } \{(x,y): y=1,~x\in(0,1)\} \notag,
\\
u &=\frac{1}{2}+\frac{1}{\pi} \arctan(10^4(y-0.7)) ,&&\text{on } \{(x,y): x=0,~y\in(0,1)\}, \notag
\\
u &=0 &&\text{on } \{(x,y): x=1,~y\in(0,1)\} \notag.
\end{align}
For DG1, we put $\epsilon=-1$ and $\sigma=14$.  Fig.~\ref{figB_1:sixgrid} contrasts the full CCFV and DG1 schemes. CCFV shows greater numerical diffusion, while DG1 yields sharper features at the cost of superious oscillations.
\begin{figure}[H] % use [htbp] if you want more placement flexibility
  \centering 
  % ---- Row 1 ----
  \begin{subfigure}{0.32\textwidth}
    \includegraphics[width=\linewidth]{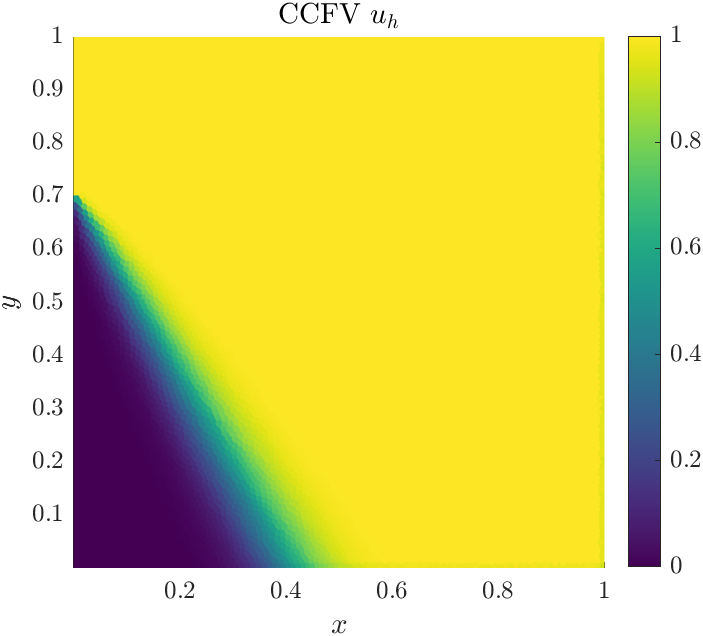}
    \subcaption{CCFV 10k elements}\label{figB_1:1a}
  \end{subfigure}\hfill
  \begin{subfigure}{0.32\textwidth}
    \includegraphics[width=\linewidth]{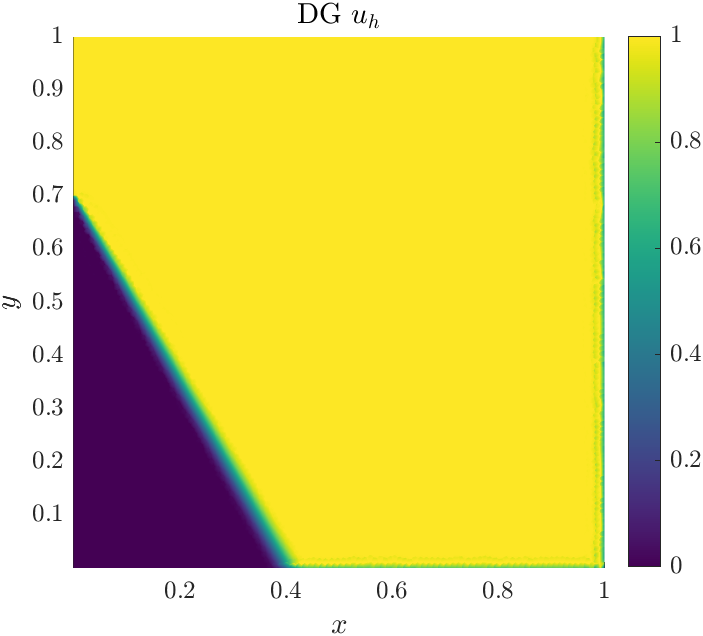}
    \subcaption{DG 10k elements}\label{figB_1:1b}
  \end{subfigure}\hfill
  \begin{subfigure}{0.34\textwidth}
    \includegraphics[width=\linewidth]{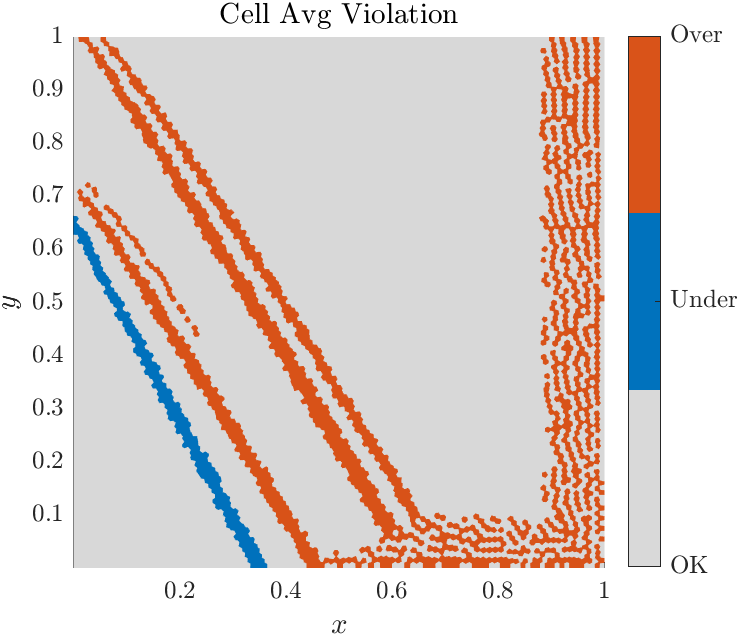}
    \subcaption{DG 10k elements (over/undershoot)}\label{figB_1:1c}
  \end{subfigure}
  \medskip % vertical gap between the two rows
  % ---- Row 2 ----
  \begin{subfigure}{0.32\textwidth}
    \includegraphics[width=\linewidth]{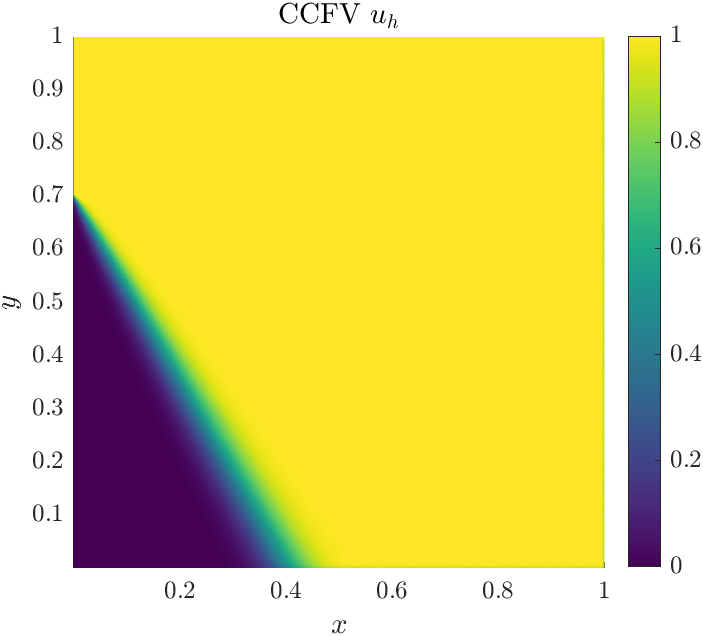}
    \subcaption{CCFV 65k elements}\label{figB_1:1d}
  \end{subfigure}\hfill
  \begin{subfigure}{0.32\textwidth}
    \includegraphics[width=\linewidth]{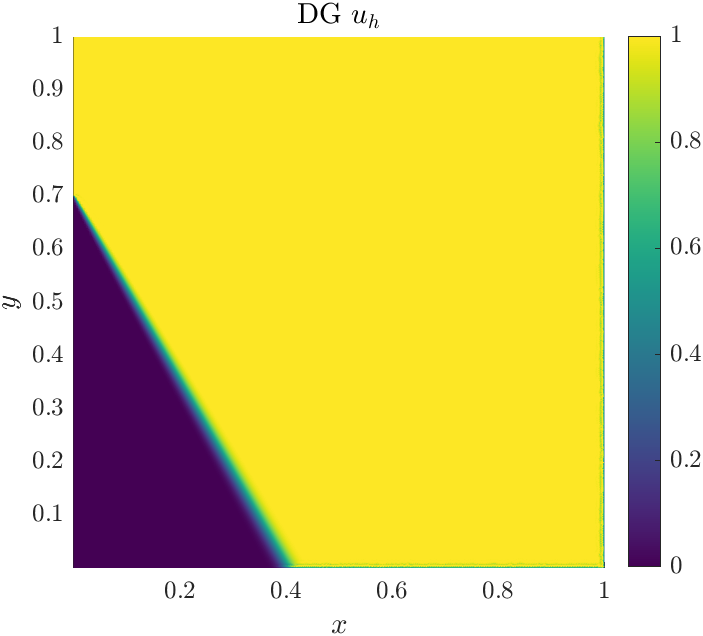}
    \subcaption{DG 65k elements}\label{figB_1:1e}
  \end{subfigure}\hfill
  \begin{subfigure}{0.34\textwidth}
    \includegraphics[width=\linewidth]{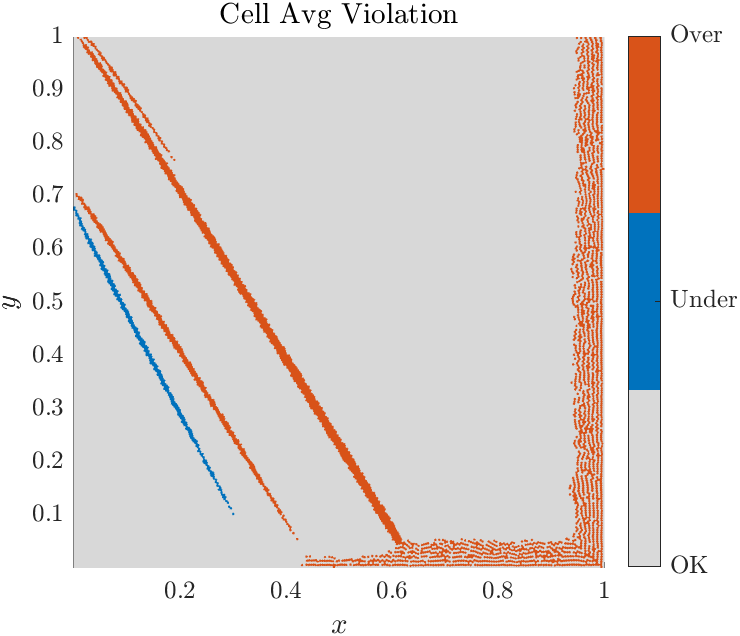}
    \subcaption{DG 65k elements (over/undershoot)}\label{figB_1:1f}
  \end{subfigure} 
  \caption{Baseline tests from Section~\ref{sec:badia} - use CCFV everywhere or DG everywhere with various mesh sizes.}
  \label{figB_1:sixgrid}
\end{figure}   
      
In Fig.~\ref{figB_2:sixgrid} we compare FV-DG1 and FV-DG2 using the OBB method~\cite{oden1998discontinuoushpfinite} with \(\epsilon=1\), \(\sigma=0\) on interior edges, and \(\sigma=1\) on boundary edges. In both configurations, DG covers most of the domain, while boundary layers near \(x=1\) and \(y=0\) are adaptively assigned FV cells to limit overshoot. Mesh refinement and increasing the DG degree from \(k=1\) to \(k=2\) both enlarge the DG regions.
\newcommand{\cellw}{0.24\textwidth} 
\begin{figure}[H]
  \centering % ---- Row 1 ----
  \begin{subfigure}[t]{\cellw}
    \centering
    \includegraphics[width=\linewidth% ,height=\imgheight,keepaspectratio
    ]{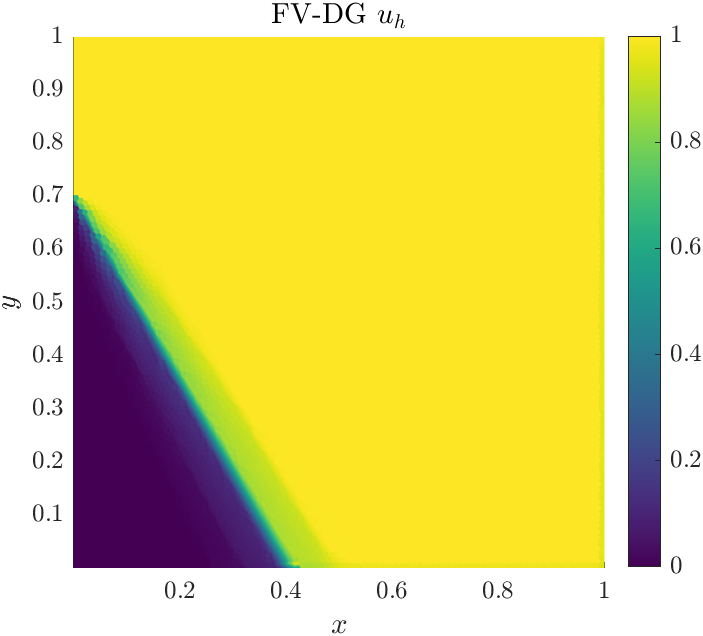}
    \caption{FV-DG1 10k elements (56\% DG, 44\% FV)}\label{figB_2:1a}
  \end{subfigure}\hfill
  \begin{subfigure}[t]{\cellw}
    \centering
    \includegraphics[width=\linewidth% ,height=\imgheight,keepaspectratio
    ]{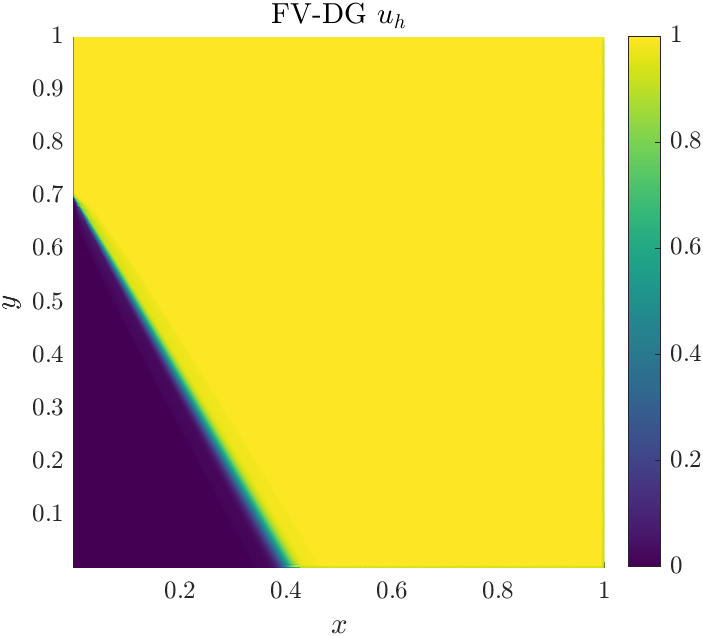}
    \caption{FV-DG1 65k elements (84\% DG, 16\% FV)}\label{figB_2:1b}
  \end{subfigure}\hfill
  \begin{subfigure}[t]{\cellw}
    \centering
    \includegraphics[width=\linewidth% ,height=\imgheight,keepaspectratio
    ]{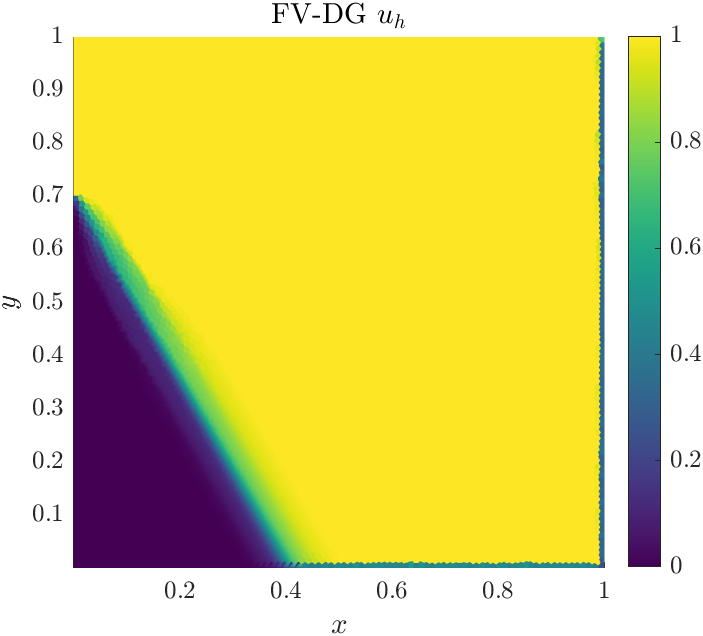}
    \caption{FV-DG2 10k elements (88\% DG, 12\% FV)}\label{figB_2:1c}
  \end{subfigure}\hfill
  \begin{subfigure}[t]{\cellw}
    \centering
    \includegraphics[width=\linewidth% ,height=\imgheight,keepaspectratio
    ]{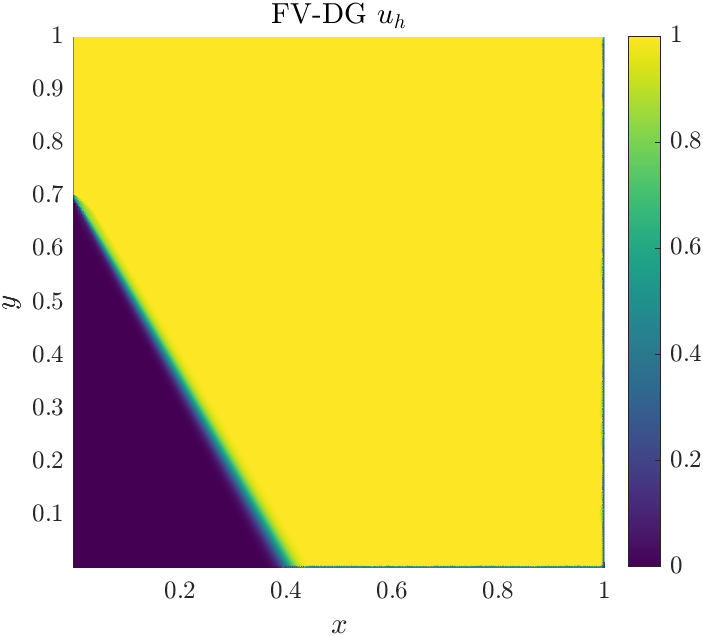}
    \caption{FV-DG2 65k elements (96\% DG, 4\% FV)}\label{figB_2:1d}
  \end{subfigure}

  \vspace{0.6em}

  % ---- Row 2 ----
  \begin{subfigure}[t]{0.24\textwidth}
    \centering
    \includegraphics[width=\linewidth% ,height=\imgheight,keepaspectratio
    ]{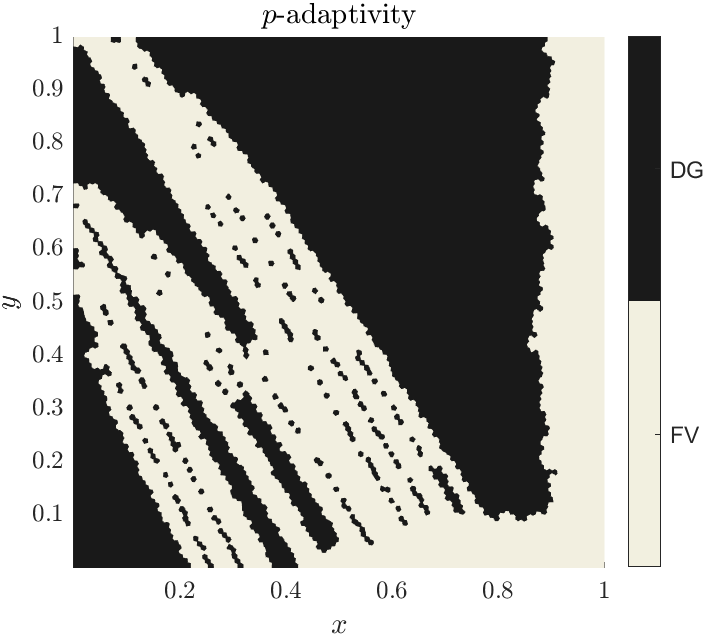}
    \caption{FV-DG1 10k elements (88\% DG, 12\% FV)}\label{figB_2:1e}
  \end{subfigure}\hfill
  \begin{subfigure}[t]{\cellw}
    \centering
    \includegraphics[width=\linewidth% ,height=\imgheight,keepaspectratio
    ]{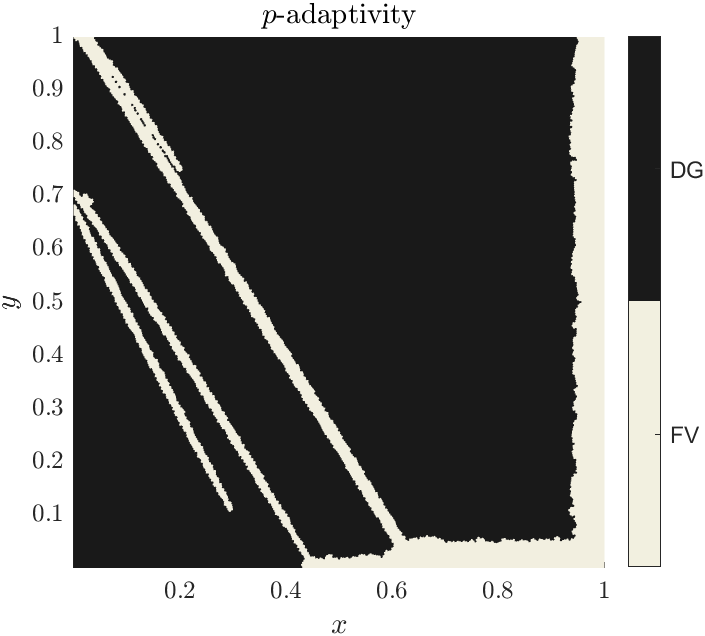}
    \caption{FV-DG1 65k elements (84\% DG, 16\% FV)}\label{figB_2:1f}
  \end{subfigure}\hfill
  \begin{subfigure}[t]{\cellw}
    \centering
    \includegraphics[width=\linewidth% ,height=\imgheight,keepaspectratio
    ]{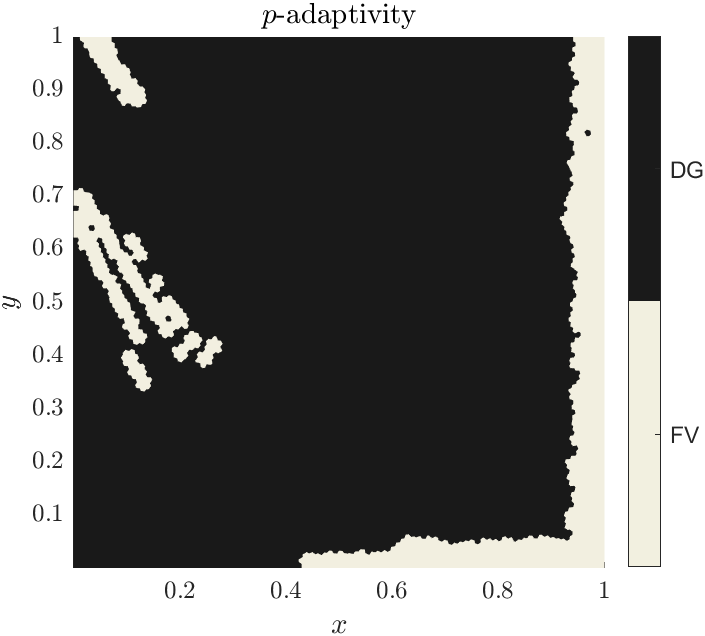}
    \caption{FV-DG2 10k elements (88\% DG, 12\% FV)}\label{figB_2:1g}
  \end{subfigure}\hfill
  \begin{subfigure}[t]{\cellw}
    \centering
    \includegraphics[width=\linewidth% ,height=\imgheight,keepaspectratio
    ]{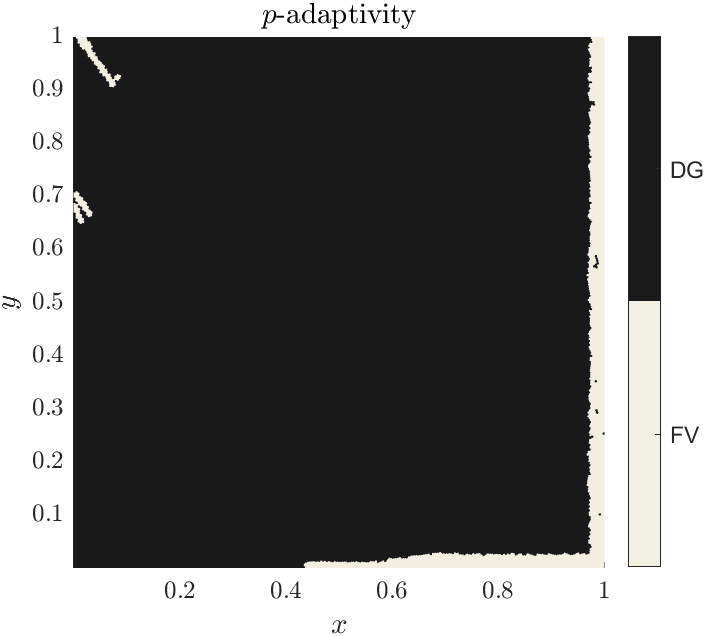}
    \caption{FV-DG2 65k elements (96\% DG, 4\% FV)}\label{figB_2:1h}
  \end{subfigure} 
  \caption{Adaptive scheme for FV-DG with $\delta = 10^{-9}$.}
  \label{figB_2:sixgrid}
\end{figure}
   The adaptive technique is quite effective for this test case.
   
\section{Hemker problem (convection-diffusion)}\label{num_H}
	As introduced in~\cite{HEMKER1996277}, the Hemker problem has become a standard benchmark for convection--diffusion models, exhibiting many of the salient features encountered in applications. It describes the transport of energy from a body through a channel. The solution remains within \([0,1]\). Sharp boundary layers form at the interior boundary, and interior layers propagate from the body in the direction of convection.
 
	The domain is a rectangle with a circular puncture at the origin with radius one: $[(-3,9)\times (-3,3)] \backslash \{(x,y): x^2+y^2\le 1\}$.  A Dirichlet condition of one is supplied on the circle boundary.  Homogeneous Dirichlet boundary conditions are supplied at $x=-3$.  The remaining boundaries are no flow (zero Neumann conditions).  A value of $K = 10^{-8}$ is taken to ensure a strongly convection-dominated regime.  Here  and $\vec{\beta}=[1,0]^T$.     
   
%Similar to the other test problems, we consider baseline approximations with full CCFV and full DG. Fig.~\ref{figH:two-by-three} shows the results for different mesh resolutions.  Full DG exhibits considerable undershoot and overshoot, but the DG cell averages mostly have overshoot.  CCFV is substantially more diffusive.
	Consistent with the other benchmarks, we consider baseline approximations using full CCFV and full DG. Fig.~\ref{figH:two-by-three} presents results at multiple mesh resolutions. Full DG exhibits considerable bound violations (undershoot/overshoot); notably, the DG cell averages are predominantly overshooting. By contrast, CCFV is markedly more diffusive.   	
   
	With \(\delta=10^{-9}\), the FV--DG adaptive scheme (Fig.~\ref{figH2:two-by-three}) delivers clear gains over Figs.~\ref{figH:a} and~\ref{figH:d}: boundary layers are resolved more sharply, and continued mesh refinement expands the DG coverage. Regions of high DG coverage coincide with areas of higher solution regularity.   	
\begin{figure}[H]
  \centering  % Row 1
  \begin{subfigure}[t]{0.32\textwidth}
    \includegraphics[width=\linewidth]{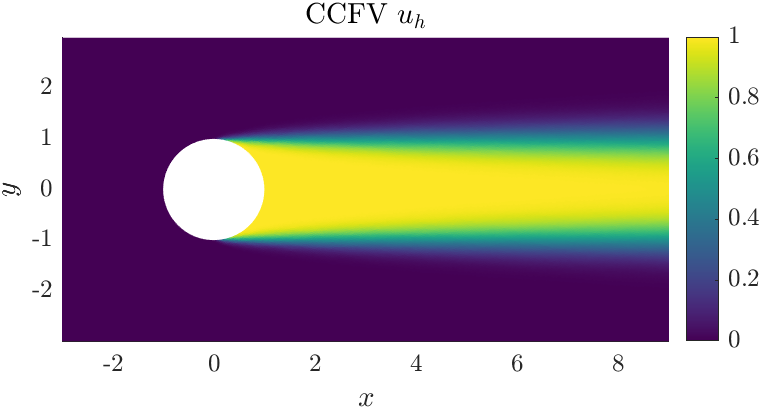}
    \caption{CCFV 65k}\label{figH:a}
  \end{subfigure}\hfill
  \begin{subfigure}[t]{0.32\textwidth}
    \includegraphics[width=\linewidth]{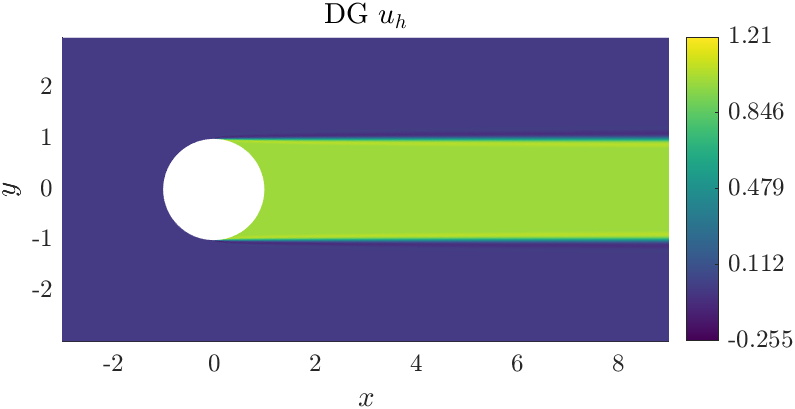}
    \caption{DG1 65k}\label{figH:b}
  \end{subfigure}\hfill
  \begin{subfigure}[t]{0.32\textwidth}
    \includegraphics[width=\linewidth]{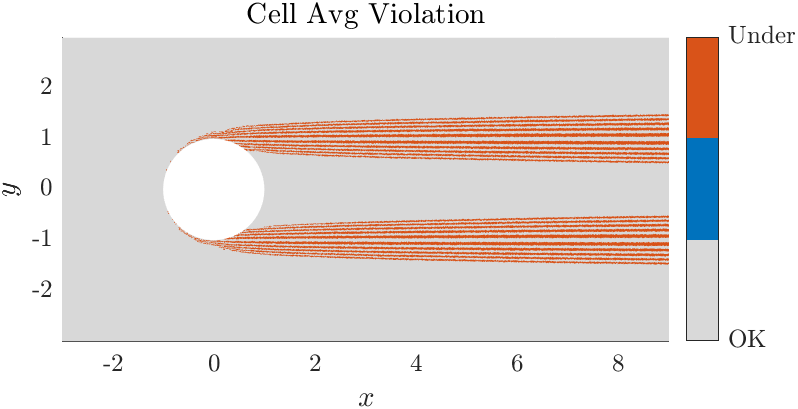}
    \caption{DG1 65k (over/undershoot)}\label{figH:c}
  \end{subfigure} 
  \vspace{0.6em} % Row 2
  \begin{subfigure}[t]{0.32\textwidth}
    \includegraphics[width=\linewidth]{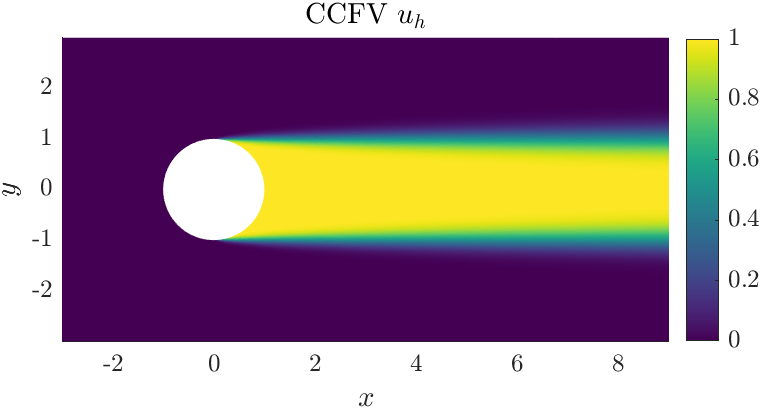}
    \caption{CCFV 262k}\label{figH:d}
  \end{subfigure}\hfill
  \begin{subfigure}[t]{0.32\textwidth}
    \includegraphics[width=\linewidth]{hemker_DG1_65k_1_1e_m_9.png}
    \caption{DG1 262k}\label{figH:e}
  \end{subfigure}\hfill
  \begin{subfigure}[t]{0.32\textwidth}
    \includegraphics[width=\linewidth]{hemker_DG1_262k_2_1e_m_9.png}
    \caption{DG1 262k (over/undershoot)}\label{figH:f}
  \end{subfigure} 
  \caption{Baseline tests for the Hemker problem.}
  \label{figH:two-by-three}
\end{figure}  
 
\begin{figure}[H]
  \centering % Row 1
  \begin{subfigure}[t]{0.32\textwidth}
    \includegraphics[width=\linewidth]{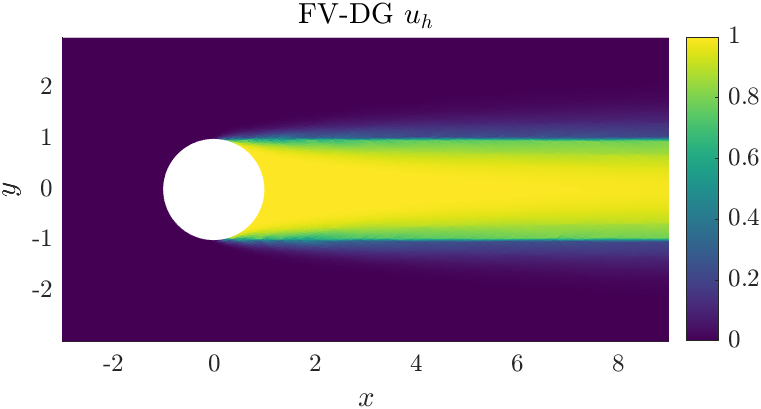}
    \caption{FV-DG 30k (41\% DG,59\% FV)}\label{figH2:a}
  \end{subfigure}\hfill
  \begin{subfigure}[t]{0.32\textwidth}
    \includegraphics[width=\linewidth]{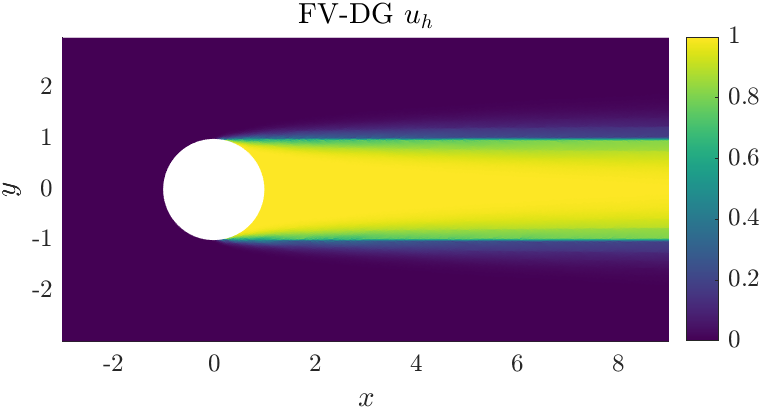}
    \caption{65k  (48\% DG,52\% FV)}\label{figH2:b}
  \end{subfigure}\hfill
  \begin{subfigure}[t]{0.32\textwidth}
    \includegraphics[width=\linewidth]{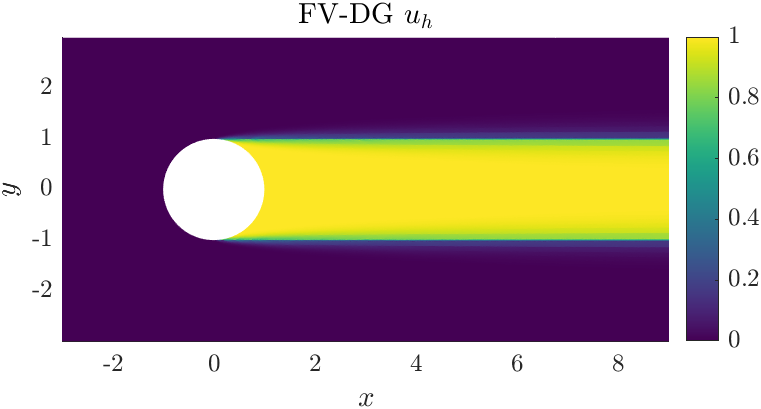}
    \caption{262k (52\% DG,48\% FV)}\label{figH2:c}
  \end{subfigure} 
  \vspace{0.6em} % Row 2
  \begin{subfigure}[t]{0.32\textwidth}
    \includegraphics[width=\linewidth]{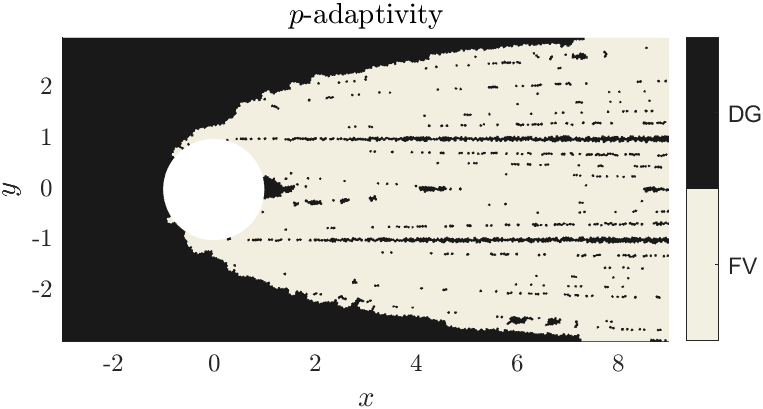}
    \caption{30k  (41\% DG,59\% FV)}\label{figH2:d}
  \end{subfigure}\hfill
  \begin{subfigure}[t]{0.32\textwidth}
    \includegraphics[width=\linewidth]{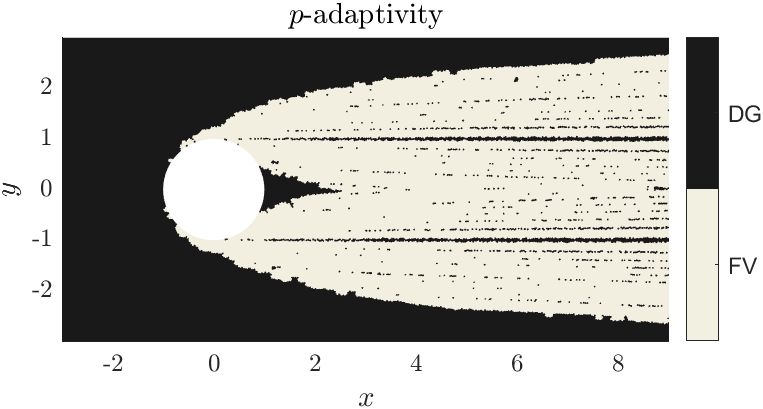}
    \caption{65k  (48\% DG,52\% FV)}\label{figH2:e}
  \end{subfigure}\hfill
  \begin{subfigure}[t]{0.32\textwidth}
    \includegraphics[width=\linewidth]{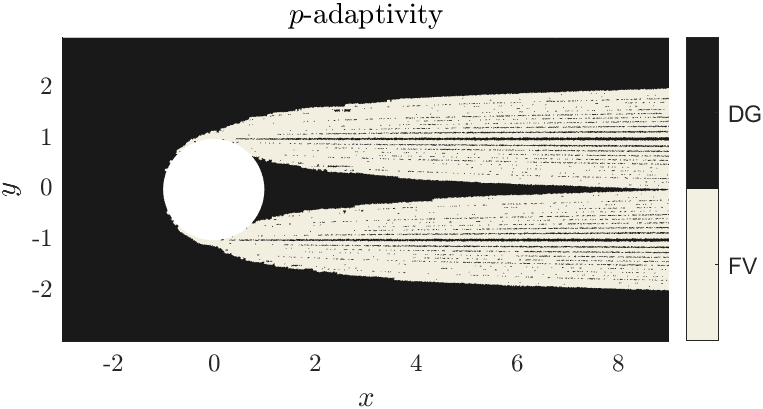}
    \caption{262k (52\% DG,48\% FV)}\label{figH2:f}
  \end{subfigure} 
  \caption{FV-DG1 results for the Hemker problem with $\delta=10^{-9}$.}
  \label{figH2:two-by-three}
\end{figure}   

Figs.~\ref{figH3:two-by-three} and~\ref{figH4:two-by-three} visualizes the various approximations from a different vantage point. A view of the solution cross section along $x = 2$ is given in Fig.~\ref{figH4:two-by-three}, along with magnification.  The overshoot and undershoot from the DG scheme is more noticeable. The solutions from CCFV and FV-DG are more diffusive, but bound-preserving.  Between the CCFV and FV-DG solutions, the latter is less diffusive.
\begin{figure}[H]
  \centering % Row 1
  \begin{subfigure}[t]{0.32\textwidth}
    \includegraphics[width=\linewidth]{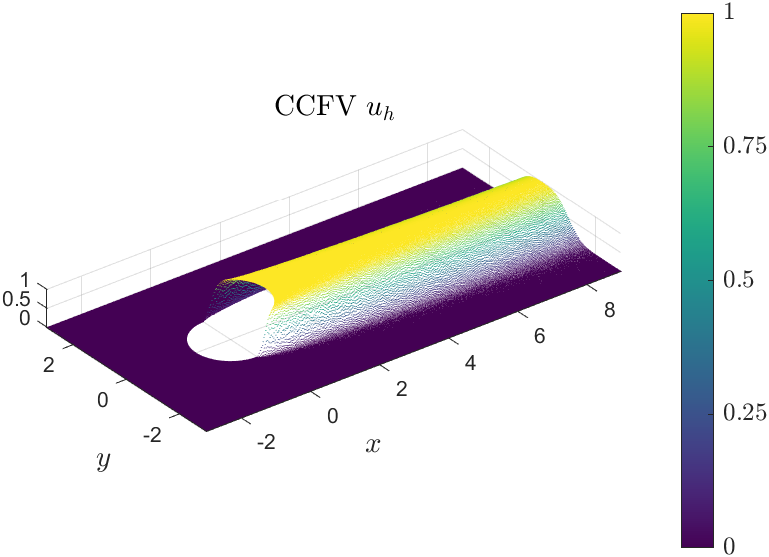}
    \caption{CCFV 65k}\label{figH3:a}
  \end{subfigure}\hfill
  \begin{subfigure}[t]{0.32\textwidth}
    \includegraphics[width=\linewidth]{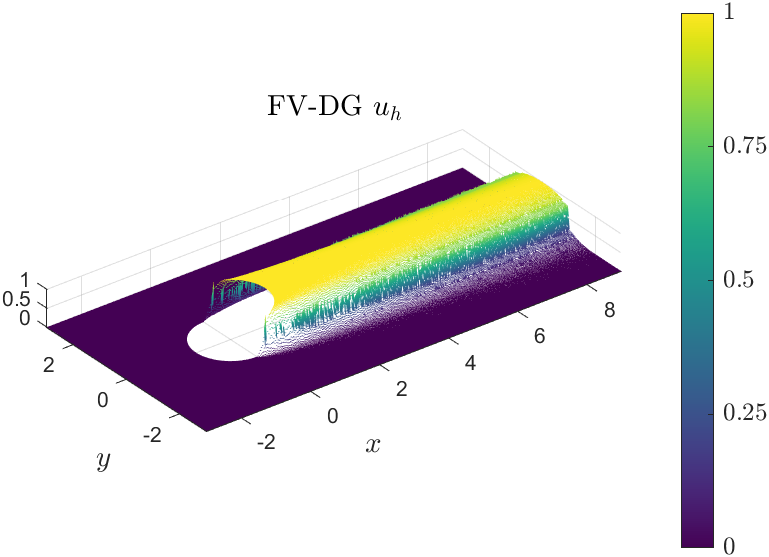}
    \caption{FV-DG 65k}\label{figH3:b}
  \end{subfigure}\hfill
  \begin{subfigure}[t]{0.32\textwidth}
    \includegraphics[width=\linewidth]{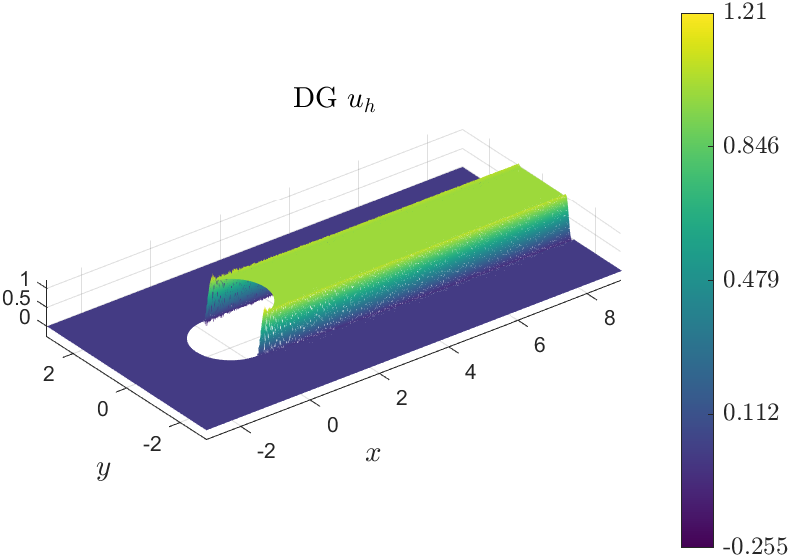}
    \caption{DG1 65k}\label{figH3:c}
  \end{subfigure}  
  \caption{3D visualization the Hemker problem with CCFV, DG1, and FV-DG ($\delta=10^{-9}$).}
  \label{figH3:two-by-three}
\end{figure}    
 
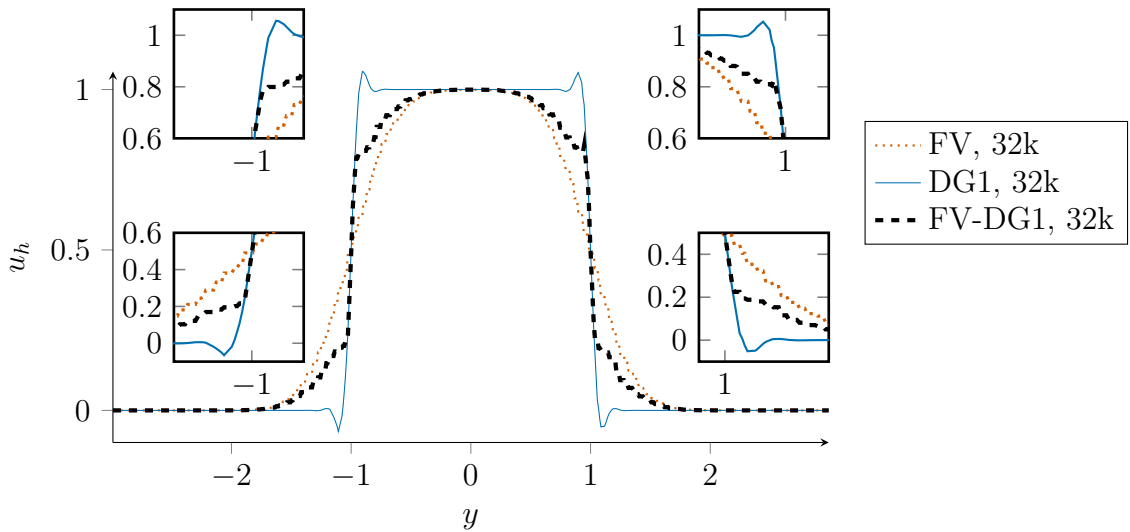
\begin{figure}[H]
  \centering % Row 1
\begin{tikzpicture} 
  \begin{axis}[
  name=main, 
  ymin=-0.1,   % lower bound,      % add 5% padding at bottom
  axis lines=left,      % no top/right edges
  tick align=outside,   % nice tick look
  %grid=both,            % optional
  legend cell align={left},
    width=11cm, height=6.5cm,
    xlabel={$y$}, ylabel={$u_h$},  
%    extra y ticks={-0.15,1.10},
	legend style={at={(1.05,0.7)}, anchor=west},
    clip=false, % let the lens stick outside the axis box 
    spy using outlines={
      circle,
      magnification=4,
      size=3cm,
      connect spies
    },
    unbounded coords=jump % ignore NaNs/Infs
  ]
    \addplot+[thick, mark=none,dotted, OIverm]
      table[
        col sep=space,
        header=false,
        x index=0,  % first column = P(:,2) (your y)
        y index=1   % second column = uline
      ] {hemker_FV_65k_profile_f.dat};
      
    \addplot+[thin, mark=none, OIblue]
      table[
        col sep=space,
        header=false,
        x index=0,  % first column = P(:,2) (your y)
        y index=1   % second column = uline
      ] {hemker_DG_65k_profile_f.dat};     
       
    \addplot+[ultra thick, mark=none, dashed,OIblack]%+[only marks, mark=o, mark size=2pt, mark options={fill=none}, each nth point={10}]%+[thick, mark=none]
      table[
        col sep=space,
        header=false,
        x index=0,  % first column = P(:,2) (your y)
        y index=1   % second column = uline
      ] {hemker_FV_DG_65k_profile_f.dat};   
    \addlegendentry{FV, 32k}
    \addlegendentry{DG1, 32k}
    \addlegendentry{FV-DG1, 32k}
  \end{axis}
   
  % -------- inset (zoom) axis --------
  \begin{axis}[
    name=zoom,
    at={(main.north east)}, anchor=north east, % position relative to main
    xshift= -0.0\linewidth, yshift=-0.13\linewidth, % small offset
    width =0.20\linewidth,
    height=0.20\linewidth,
    xtick={1.0},
    tick style={semithick},
    line width=1pt,
    legend=false,           % no legend in the inset
    xmin= 0.9, xmax=1.4,   % <<< set your zoom window here
    ymin=-0.1, ymax=0.5,
    % ymin=<...>, ymax=<...> % optional: tighten y-range
  ]
    % Re-plot the same data; 'forget plot' keeps the main legend unchanged
    \addplot+[very thick, mark=none, dotted,OIverm]
      table[
        col sep=space,
        header=false,
        x index=0,  % first column = P(:,2) (your y)
        y index=1   % second column = uline
      ] {hemker_FV_65k_profile_f.dat};
      
    \addplot+[thick, mark=none,OIblue]
      table[
        col sep=space,
        header=false,
        x index=0,  % first column = P(:,2) (your y)
        y index=1   % second column = uline
      ] {hemker_DG_65k_profile_f.dat};     
       
    \addplot+[ultra thick, mark=none,dashed,OIblack]
      table[
        col sep=space,
        header=false,
        x index=0,  % first column = P(:,2) (your y)
        y index=1   % second column = uline
      ] {hemker_FV_DG_65k_profile_f.dat}; 
  \end{axis}

  \begin{axis}[
    name=zoom0,
    at={(main.north east)}, anchor=north east, % position relative to main
    xshift= -0.0\linewidth, yshift= 0.05\linewidth, % small offset
    width =0.20\linewidth,
    height=0.20\linewidth,
    xtick={1.0},
    tick style={semithick},
    line width=1pt,
    legend=false,           % no legend in the inset
    xmin= 0.6, xmax=1.2,   % <<< set your zoom window here
    ymin= 0.6, ymax=1.1,
    % ymin=<...>, ymax=<...> % optional: tighten y-range
  ]
    % Re-plot the same data; 'forget plot' keeps the main legend unchanged
    \addplot+[very thick, mark=none, dotted,OIverm]
      table[
        col sep=space,
        header=false,
        x index=0,  % first column = P(:,2) (your y)
        y index=1   % second column = uline
      ] {hemker_FV_65k_profile_f.dat};
      
    \addplot+[thick, mark=none,OIblue]
      table[
        col sep=space,
        header=false,
        x index=0,  % first column = P(:,2) (your y)
        y index=1   % second column = uline
      ] {hemker_DG_65k_profile_f.dat};     
       
    \addplot+[ultra thick, mark=none,dashed,OIblack]
      table[
        col sep=space,
        header=false,
        x index=0,  % first column = P(:,2) (your y)
        y index=1   % second column = uline
      ] {hemker_FV_DG_65k_profile_f.dat}; 
  \end{axis}

  \begin{axis}[
    name=zoom2,
    at={(main.north east)}, anchor=north east, % position relative to main
    xshift= -0.42\linewidth, yshift= 0.05\linewidth, % small offset
    width =0.20\linewidth,
    height=0.20\linewidth,
    xtick={-1.0},
    tick style={semithick},
    line width=1pt,
    legend=false,           % no legend in the inset
    xmin= -1.3, xmax=-0.8,   % <<< set your zoom window here
    ymin=0.6, ymax=1.1,
%    xtick={-1.3,-0.9},
    % ymin=<...>, ymax=<...> % optional: tighten y-range
  ]
    % Re-plot the same data; 'forget plot' keeps the main legend unchanged
    \addplot+[very thick, mark=none, dotted,OIverm]
      table[
        col sep=space,
        header=false,
        x index=0,  % first column = P(:,2) (your y)
        y index=1   % second column = uline
      ] {hemker_FV_65k_profile_f.dat};%hemker_FV_65k_profile
      
    \addplot+[thick, mark=none,OIblue]
      table[
        col sep=space,
        header=false,
        x index=0,  % first column = P(:,2) (your y)
        y index=1   % second column = uline
      ] {hemker_DG_65k_profile_f.dat};     
       
    \addplot+[ultra thick, mark=none,dashed,OIblack]
      table[
        col sep=space,
        header=false,
        x index=0,  % first column = P(:,2) (your y)
        y index=1   % second column = uline
      ] {hemker_FV_DG_65k_profile_f.dat}; %hemker_FVDG_262k_profile
  \end{axis}

  \begin{axis}[
    name=zoom3,
    at={(main.north east)}, anchor=north east, % position relative to main
    xshift= -0.42\linewidth, yshift= -0.13\linewidth, % small offset
    width =0.20\linewidth,
    height=0.20\linewidth,
    xtick={-1.0},
    tick style={semithick},
    line width=1pt,
    legend=false,           % no legend in the inset
    xmin= -1.3, xmax=-0.8,   % <<< set your zoom window here
    ymin=-0.1, ymax=0.6,
%    xtick={-1.3,-0.9},
    % ymin=<...>, ymax=<...> % optional: tighten y-range
  ]
    % Re-plot the same data; 'forget plot' keeps the main legend unchanged
    \addplot+[very thick, mark=none, dotted,OIverm]
      table[
        col sep=space,
        header=false,
        x index=0,  % first column = P(:,2) (your y)
        y index=1   % second column = uline
      ] {hemker_FV_65k_profile_f.dat};%hemker_FV_65k_profile
      
    \addplot+[thick, mark=none,OIblue]
      table[
        col sep=space,
        header=false,
        x index=0,  % first column = P(:,2) (your y)
        y index=1   % second column = uline
      ] {hemker_DG_65k_profile_f.dat};     
       
    \addplot+[ultra thick, mark=none,dashed ,OIblack]
      table[
        col sep=space,
        header=false,
        x index=0,  % first column = P(:,2) (your y)
        y index=1   % second column = uline
      ] {hemker_FV_DG_65k_profile_f.dat}; %hemker_FVDG_262k_profile
  \end{axis}    
\end{tikzpicture}
\caption{Profile along $x = 2$ for the Hemker problem.}
  \label{figH4:two-by-three}
\end{figure}  
   
\section{Iteration counts,  measure for the size of the spurious oscillations}\label{num_M}
	Here we detail some metrics for the adaptive technique from Section~\ref{sec:adapt}.  In \cite{FRERICHS2021113487}, a metric for quantifying spurious oscillations is proposed; we employ that measure here:
\[
\text{osc}(u_h) = 
\frac{1}{|\mathcal{T}_h|} 
\bigg[
\sum_{E\in \mathcal{T}_h} 
\max\{0, -u_{\text{max}} + \max_{(x,y)\in E} u_h(x,y)\}
+
\max\{0,  u_{\text{min}} - \max_{(x,y)\in E} u_h(x,y)\}
\bigg]
,
\]
where $u_h$ is understood to be the solution to~\eqref{eq:discrete-problem}, and the exact solution to~\eqref{model_problem1} satisfies $u_{\text{min}} \le u\le u_{\text{max}}$.  As noted in Section~\ref{secT:oscillations}, $\operatorname{osc}(u_h)$ reports only deviations beyond $\min u$ and $\max u$; in-range oscillations are not captured. A value of $\operatorname{osc}(u_h)$ near zero indicates that the approximation does not exhibit large extremal oscillations.

	Table~\ref{tab:summary} reports the results. The \texttt{iters} column gives the total count of adaptation cycles (activations of the procedure in Section~\ref{sec:adapt}).  Across all cases, we fix the mesh. For the full DG scheme, $\operatorname{osc}(u_h)$ stays on the order of $10^{-1}$ and shows little reduction under mesh refinement. The adaptive strategy of Section~\ref{sec:adapt} yields a bound-preserving FV-DG scheme up to the prescribed tolerance $\delta$. As $\delta$ is decreased, the measured oscillation magnitude drops accordingly, with $\operatorname{osc}(u_h)\approx \delta$.
	
	In the \texttt{iters} column, we see that the adaptation cycles required to meet a certain tolerance remains under 10. As one might expect, as the tolerance $\delta$ is made more strict, addition iterations are required.  A more aggressive patch neighborhood $\mathcal{N}$ can also reduce the number of iterations. Adaptation cycles contribute negligibly to the total runtime, whereas the linear system solve and Voronoi remeshing are the primary bottlenecks.
\begin{table}[H]
\centering
\caption{Summary of metrics for each test case.}
\label{tab:summary}
\setlength{\tabcolsep}{6pt}
\begin{tabular}{@{} l l c c c c c @{}}
\toprule
Problem & Case & $|\mathcal{T}_h|$ & $\mathrm{osc}_{\mathrm{avg}}$ & \texttt{iters} & $\delta$ &   \\
\midrule
\multirow{4}{*}{Triple layer (Section~\ref{sec:Triple})} & DG1     & 262k & 1.280$\cdot 10^{-1}$ & --  & --   & --  \\
                        & FV-DG1  & 262k & 2.128$\cdot 10^{-6}$  & 4   & $10^{-6}$  &   \\
                        & FV-DG1  & 262k & 1.102$\cdot 10^{-9}$  & 6   & $10^{-9}$  &   \\                           
                        & FV-DG1  & 262k & 3.120$\cdot 10^{-13}$ & 7   & $10^{-13}$  &   \\  
                        \\
\addlinespace[2pt]
\multirow{4}{*}{L-Shaped (Section~\ref{num_L})} & DG1     & 262k   & 1.332$\cdot 10^{-1 }$ & --  & --   & --  \\
                          & FV-DG1  & 262k   & 1.923$\cdot 10^{-11}$  & 3   & $10^{-6}$  &   \\
                          & FV-DG1  & 262k   & 1.923$\cdot 10^{-11}$  & 3   & $10^{-9}$  &   \\                           
                          & FV-DG1  & 262k   & 1.817$\cdot 10^{-13}$  & 4   & $10^{-13}$  &   \\  
                        \\
\addlinespace[2pt]
\multirow{4}{*}{Internal layer (Section~\ref{sec:badia})} & DG1 & 262k      &   & --  & --   & --  \\
                                & FV-DG1  & 262k   & 2.832$\cdot 10^{-6}$   & 2   & $10^{-6}$  &   \\
                                & FV-DG1  & 262k   & 9.952$\cdot 10^{-9}$   & 3   & $10^{-9}$  &   \\                           
                                & FV-DG1  & 262k   & 2.256$\cdot 10^{-13}$  & 4   & $10^{-13}$  &   \\                            
                        \\
\addlinespace[2pt]
\multirow{4}{*}{Hemker (Section~\ref{num_H})} & DG1    & 262k & 7.040$\cdot 10^{-2}$ & -- & --       & --  \\
                        & FV-DG1 & 262k & 3.195$\cdot 10^{-6}$  &  5 & $10^{-6}$ &   \\
                        & FV-DG1 & 262k & 1.330$\cdot 10^{-9}$  &  6 & $10^{-9}$ &   \\
                        & FV-DG1 & 262k & 1.162$\cdot 10^{-13}$ &  8 & $10^{-13}$&   \\                         
\bottomrule
\end{tabular}
\end{table} 
\section{Conclusion}
%	In this paper, we solve the convection–diffusion equation using a coupling of cell-centered finite volume and discontinuous Galerkin methods.  We propose a novel adaptive partitioning strategy that automatically determines FV and DG subdomains. If the solution's cell average violates the bounds, all elements in a small neighborhood are added to the FV partition.  We demonstrated that this procedure fits naturally in the standard $hp$-adaptivity context. This process is repeated until all cell averages are bound-preserving (up to some specified tolerance). The method always terminates with cell averages within bounds, because the CCFV method is monotone. Numerous convection-dominated benchmarks confirm the effectiveness of the adaptive technique. 
	In this paper we solve the steady convection–diffusion equation using a multinumerics coupling of cell-centered finite volume and discontinuous Galerkin methods. We introduce a novel adaptive partitioning strategy that automatically selects FV and DG subdomains: whenever the cell average on an element falls outside the admissible range, that element and a small neighborhood are reassigned to the FV partition. The procedure integrates naturally with standard $hp$-adaptivity and is iterated until all cell averages satisfy the bounds up to a user-specified tolerance. Termination is guaranteed because the CCFV discretization is monotone.  Since the DG and FV methods are not modified in any way during adaption, standard stability and error analysis ensure the scheme is grounded in a rigorous framework.  Numerical results on convection-dominated benchmarks demonstrate the effectiveness of the approach.
 
	Future work will investigate alternative selection of the patch neighborhoods, slope limiters for Voronoi meshes, and algorithmic changes to Variants~1–2. Proving early termination of Variant 1 remains an open question.

\section*{Acknowledgments} \label{}
This work was partially supported by the MIT Schwarzman College of Computing.

 \bibliographystyle{elsarticle-num} 
 %\bibliography{cas-refs}
\bibliography{library.bib}
%% else use the following coding to input the bibitems directly in the
%% TeX file.

% \begin{thebibliography}{00}

% %% \bibitem{label}
% %% Text of bibliographic item

% \bibitem{}

% \end{thebibliography}
\end{document}